\newtheorem{theorem}{Theorem}[section]
\newtheorem{lemma}[theorem]{Lemma}
\newtheorem{conjecture}[theorem]{Conjecture}
\newtheorem*{carrier theorem}{Carrier Theorem}
\theoremstyle{definition}
\newtheorem*{definition}{Definition}
\newtheorem{construction}[theorem]{Construction}
\theoremstyle{remark}
\newcommand{\B}[1]{{\mathcal{B}_{#1}}}
\newcommand{\F}[1]{{\mathcal{F}_{#1}}}
\newcommand{\G}[1]{{\mathcal{G}_{#1}}}
\newcommand{\U}[1]{{\mathcal{U}_{#1}}}
\newcommand{\V}[1]{{\mathcal{V}_{#1}}}
\newcommand{\W}[1]{{\mathcal{W}_{#1}}}
\newcommand{\N}{{\mathcal{N}}}
\newcommand{\Akn}{{\mathcal{A}_{\kappa,n}}}
\renewcommand{\O}[1]{{\mathcal{O}_{#1}}}
\newcommand{\df}[1]{{\bf #1}}
\newcommand{\bts}{{{\sc(bts}$_{\kappa,n}${\sc)}}}
\newcommand{\qts}{{{\sc(qts}$_{\kappa,n}${\sc)}}}
\newcommand{\nuk}{{\nu^n_\kappa(K)}}
\DeclareMathOperator{\diam}{diam}
\DeclareMathOperator{\st}{st}
\DeclareMathOperator{\ost}{ost}
\DeclareMathOperator{\mesh}{mesh}
\DeclareMathOperator{\Cl}{Cl}
\begin{document}

\baselineskip=17pt

\title{A construction of {N}\"obeling manifolds of arbitrary weight}

\author{G.~C.~Bell}
\address[G.~C.~Bell]{Department of Mathematics and Statistics, University of North Carolina at Greensboro, Greensboro, NC 27412, USA}
\email{gcbell@uncg.edu}

\author[A.~Nagórko]{A.~Nagórko}
\address[A.~Nagórko]{Faculty of Mathematics, Informatics, and Mechanics, University of Warsaw, Banacha 2, 02-097 Warszawa, Poland}
\email{amn@mimuw.edu.pl}
\thanks{This research was supported by the NCN (Narodowe Centrum Nauki) grant no. 2011/01/D/ST1/04144.}

\date{}

\begin{abstract}
For each cardinal $\kappa$, each natural number $n$ and each simplicial complex $K$ we construct a space $\nuk$ and a map $\pi \colon \nuk \to K$ such that the following conditions are satisfied.
\begin{enumerate}
\item $\nuk$ is a complete metric $n$-dimensional space of weight $\kappa$.
\item $\nuk$ is an absolute neighborhood extensor in dimension $n$.
\item $\nuk$ is strongly universal in the class of $n$-dimensional complete metric spaces of weight~$\kappa$.
\item $\pi$ is an $n$-homotopy equivalence.
\end{enumerate}
For $\kappa = \omega$ the constructed spaces are $n$-dimensional separable N\"obeling manifolds.
The constructed spaces have very interesting fractal-like internal structure
  that allows for easy construction, subdivision, and surgery of brick partitions.
\end{abstract}

\subjclass[2010]{Primary 55M10, 54F45; Secondary 54C55}

\keywords{N\"obeling space}

\maketitle

\section{Introduction}

An $n$-dimensional N\"obeling space is a subset $\nu^n$ of $\mathbb{R}^{2n+1}$ consisting of points with at most $n$ rational coordinates.
They were constructed in~$1931$ by N\"obeling~\cite{nobeling1931}. The $n$-dimensional N\"obeling space is a universal space in the class of $n$-dimensional separable metric spaces; i.e., it contains a topological copy of every separable metric $n$-dimensional space as a subspace.

In the 1980s it was conjectured that $\nu^n$ is an $n$-dimensional analog of the Hilbert space~$\ell^2$. 
The conjecture was inspired by the observation that $\nu^n$ satisfies $n$-dimensional analogs of properties that topologically characterize the Hilbert space.
The analogy is formalized in the following theorem.
\begin{theorem}
A space $X$ is homeomorphic to $\nu^n$ if and only if the following conditions are satisfied.
\begin{enumerate}
\item $X$ is a separable complete metric $n$-dimensional space.
\item $X$ is an absolute extensor in dimension $n$.
\item $X$ is strongly universal in the class of $n$-dimensional separable metric metric spaces (every continuous map from a separable $n$-dimensional metric space into $X$ can be arbitrarily closely approximated by closed embeddings).
\end{enumerate}
\end{theorem}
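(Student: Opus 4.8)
The plan is to prove the two implications separately; almost all the content is in the ``if'' direction.

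\textbf{The ``only if'' direction.} I would check that $\nu^n$ itself has properties (1)--(3). Separability and metrizability are inherited from $\mathbb{R}^{2n+1}$; completeness holds because the complement of $\nu^n$ --- the set of points with at least $n+1$ rational coordinates --- is a countable union of affine subspaces, hence $F_\sigma$, so $\nu^n$ is a $G_\delta$ in a Polish space; and $\dim\nu^n=n$ is the classical N\"obeling--Hurewicz--Wallman computation. For (2): given a metric space $Z$ with $\dim Z\le n$, a closed $A\subseteq Z$, and $f\colon A\to\nu^n$, extend $f$ over $Z$ into $\mathbb{R}^{2n+1}$ by Tietze and then push the image of $Z\setminus A$, by an arbitrarily small controlled general-position move fixing $A$, off the countably many $n$-dimensional affine subspaces whose union is the complement of $\nu^n$. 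For (3): approximate a given map from a Polish $n$-dimensional space $Y$ first by a closed embedding into $\mathbb{R}^{2n+1}$ (possible since $2n+1\ge 2\dim Y+1$) and then push it into $\nu^n$ by the same device, preserving injectivity and closedness.

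\textbf{The ``if'' direction: machinery.} Suppose now $X$ satisfies (1)--(3); I must build a homeomorphism $X\to\nu^n$. The approach is an $n$-dimensional analogue of Toru\'nczyk's characterization of $\ell^2$-manifolds together with the accompanying absorbing-set technique. I would first set up: (i) an $n$-homotopy calculus ($n$-homotopies, $n$-homotopy equivalences, $n$-soft maps) in which $\mathrm{AE}(n)$ spaces behave like CW complexes through dimension $n$; (ii) the notion of a $Z_n$-set in a Polish $\mathrm{AE}(n)$ space $M$ --- a closed set $A$ such that $\mathrm{id}_M$ is approximable by maps into $M\setminus A$, the approximation controlled against $\le n$-dimensional test spaces; and (iii), the technical heart, a \emph{$Z_n$-set unknotting theorem}: a homeomorphism $h\colon A\to B$ between $Z_n$-sets of $M$ that is $n$-homotopic in $M$ to the inclusion, with suitable control, extends to a homeomorphism of $M$ arbitrarily close to $\mathrm{id}_M$.

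\textbf{The ``if'' direction: assembling the homeomorphism.} Granting this, I would construct $X\to\nu^n$ as a limit of near-homeomorphisms. Using $\mathrm{AE}(n)$ on both sides produces maps $X\to\nu^n$ and $\nu^n\to X$ that are $n$-homotopy equivalences; strong universality and completeness of $X$ and of $\nu^n$ upgrade such maps, restricted to $\sigma$-$Z_n$ skeleta, to closed embeddings onto $Z_n$-sets; and the unknotting theorem is applied at each of countably many stages to adjust the two maps so that their compositions move steadily toward the respective identities while the maps themselves converge. Passing to the limit yields mutually inverse homeomorphisms, hence $X\cong\nu^n$. Equivalently, one may fix an ambient $n$-dimensional $\mathrm{AE}(n)$ model --- e.g. $X\times\nu^n$, or the author's $\Akn$ when $\kappa=\omega$ --- and run the back-and-forth for $\mathcal{Z}_n$-absorbing subsets there.

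\textbf{Main obstacle.} The crux is step (iii): the downstream assembly is soft once unknotting is available, but the Hilbert-space arguments do not transfer, because in dimension $n$ one cannot absorb arbitrary ambient thickness --- dimension must be tracked at every step. I would expect to build the extending homeomorphism as an infinite composition of small moves, each supported near a $Z_n$-set and defined ``coordinate-wise'' so that an $n$-dimensional general-position estimate keeps images off the relevant forbidden sets, with a summability bound forcing convergence. This is exactly where the fractal-like brick-partition structure highlighted in the abstract becomes essential: a canonical, self-similar system of brick partitions of the model furnishes the concrete scaffold on which the moves are defined, iterated, and estimated.
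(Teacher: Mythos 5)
This statement is not proved in the paper at all: it is quoted as background and attributed to the independent 2006 solutions of Ageev, Levin, and Nag\'orko (the paper's citations \cite{ageev2007a,ageev2007b,ageev2007c,levin2006,nagorkophd,nagorko2013}), so there is no in-paper argument to compare yours against. Judged on its own terms, your ``only if'' direction is essentially correct and genuinely routine: the $G_\delta$/completeness observation, the dimension computation, and the general-position push off the countably many $n$-dimensional affine subspaces are all standard and would survive being written out in detail.

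The ``if'' direction, however, contains a gap that coincides with the entire content of the theorem. You correctly identify the architecture (Toru\'nczyk-style back-and-forth, $Z_n$-sets, absorbing skeleta, a limit of near-homeomorphisms), and you correctly identify the $Z_n$-set unknotting theorem as the crux --- but you then only gesture at how it might be proved (``an infinite composition of small moves \ldots with a summability bound''). That step is precisely what kept this conjecture open from the 1980s until 2006; the Hilbert-space absorption arguments fail because, as you note, dimension must be tracked at every stage, and making the ``coordinate-wise general-position estimate'' work is the subject of entire papers (Nag\'orko's proof, for instance, routes through $n$-regular covers, the Carrier Theorem, and a bricks/partition calculus rather than literal coordinate moves in $\mathbb{R}^{2n+1}$). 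As written, your proposal is an accurate roadmap of the known proofs, not a proof: the reduction of the theorem to unknotting is sound, but the unknotting theorem itself is asserted, not established, and no amount of the downstream assembly compensates for that. If your goal is a self-contained argument, the honest statement is that everything after ``Granting this'' is conditional on a theorem whose proof is the hard part.
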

Observe that for $n=\infty$ we have $\nu^\infty = \mathbb{R}^\infty$, which is homeomorphic to $\ell^2$. 
Hence for $n = \infty$ this theorem is a reformulation of a famous characterization theorem of Toruńczyk~\cite{torunczyk1981}.
For $n < \infty$ this was a long-standing open conjecture that was proved independently in $2006$ in~\cite{ageev2007a,ageev2007b,ageev2007c,levin2006,nagorkophd,nagorko2013}.

Toruńczyk gave topological characterization of non-separable Banach spaces of weight $\kappa$ that concluded a program of topological characterization of Banach spaces that was started by Fr\'echet \cite{torunczyk1981}.
The analoguous result for N\"obeling spaces is not known.
We state it in the form of the following conjecture, which is also a rigidity
  theorem for N\"obeling manifolds. 
A space is a N\"obeling manifold if it is locally homeomorphic to a N\"obeling space.
There is analogous rigidity result for separable N\"obeling manifolds proved in~\cite{nagorko2013}.
In particular it implies that a separable N\"obeling manifold is homeomorphic to $\nu^n$ if and only if it has vanishing homotopy groups in dimensions less than $n$.
\begin{definition}
  An \df{abstract $n$-dimensional N\"obeling manifold of weight $\kappa$} is a space that satsifies the following conditions.
  \begin{enumerate}
\item $X$ is an $n$-dimensional complete metric space of weight $\kappa$.
\item $X$ is an absolute neighborhood extensor in dimension $n$.
\item $X$ is strongly universal in the class of $n$-dimensional complete metric spaces of weight~$\kappa$.
\end{enumerate}
  An \df{abstract $n$-dimensional N\"obeling space of weight $\kappa$} is an abstract $n$-dimensional N\"obeling manifold of weight $\kappa$ that has vanishing homotopy groups of dimensions less than~$n$.
\end{definition}
\begin{conjecture}
  Two abstract $n$-dimensional N\"obeling manifolds of weight $\kappa$ are homeomorphic if and only if they are $n$-homotopy equivalent.
\end{conjecture}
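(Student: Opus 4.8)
The plan is to prove the nontrivial implication — that an $n$-homotopy equivalence between two abstract $n$-dimensional N\"obeling manifolds of weight $\kappa$ upgrades to a homeomorphism — by reducing it to a statement about the concrete models $\nuk$ produced by the construction; the reverse implication is immediate, since a homeomorphism is in particular an $n$-homotopy equivalence. The first substantial step is a \emph{parametrization} (``triangulation'') theorem: every abstract $n$-dimensional N\"obeling manifold $X$ of weight $\kappa$ is homeomorphic to $\nuk$ for a suitable simplicial complex $K$. This should follow the pattern of the separable characterization proofs: using that $X$ is a complete metric $n$-dimensional space that is an absolute neighborhood extensor in dimension $n$, one builds by a nerve/resolution argument a compatible family of open covers whose nerves assemble into $K$, and then strong universality is used to embed the defining mapping data so that the resulting resolution is, up to homeomorphism, exactly the one defining $\nuk$. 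In the nonseparable setting the countable exhaustion available in the classical arguments must be replaced by a transfinite induction of length $\kappa$, and the fractal-like internal structure of $\nuk$ emphasized in the abstract is precisely what should make the required subdivision and brick-partition surgery uniform across that induction.

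Granting the parametrization theorem, the problem reduces to showing that if $\nuk$ and $\nu^n_\kappa(L)$ are $n$-homotopy equivalent then they are homeomorphic. Composing with the $n$-homotopy equivalences $\pi_K \colon \nuk \to K$ and $\pi_L \colon \nu^n_\kappa(L) \to L$ furnished by the construction shows that $K$ and $L$ are themselves $n$-homotopy equivalent, so it suffices to prove that $n$-homotopy equivalent complexes yield homeomorphic models. Here I would exploit naturality of the construction together with a Whitehead-type argument: an $n$-homotopy equivalence $K \to L$ is realized, after subdivision, by a sequence of elementary moves, and each elementary move should lift to a homeomorphism of the corresponding models because the models are assembled cell-by-cell from the same fractal building block. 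The outcome of this step is a \emph{fine} homotopy equivalence $\nuk \to \nu^n_\kappa(L)$, i.e.\ one whose tracks are controlled by an arbitrarily fine open cover.

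The decisive technical ingredient — and the step I expect to be the main obstacle — is an $n$-dimensional $Z$-set unknotting (or $\alpha$-approximation) theorem for abstract N\"obeling manifolds of weight $\kappa$: any sufficiently fine homotopy equivalence between two such manifolds is $\varepsilon$-close to a homeomorphism, and, correspondingly, two $n$-homotopic $Z$-embeddings of an $n$-dimensional complete metric space of weight $\le \kappa$ into such a manifold are ambiently homeomorphic by a homeomorphism close to the identity. In the separable case this rests on the characterization theorem and on delicate general-position and $Z$-set absorption arguments; in weight $\kappa$ one needs these to survive a transfinite bookkeeping in which the extension property in dimension $n$, the $Z$-set structure, and weight-$\kappa$ strong universality are all maintained simultaneously at limit stages. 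Establishing this unknotting theorem — equivalently, a controlled form of local contractibility of the homeomorphism group of $\nuk$ — is where the bulk of the work lies; once it is available, the chain ``abstract manifold $\rightsquigarrow$ model over a complex $\rightsquigarrow$ fine homotopy equivalence of models $\rightsquigarrow$ homeomorphism'' closes and yields the conjecture.
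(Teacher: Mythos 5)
This statement is not a theorem of the paper: it is stated explicitly as an open \emph{conjecture}. The authors say just before it that ``the analogous result for N\"obeling spaces is not known,'' and the paper's contribution is only the construction of the model spaces $\nuk$ (existence of abstract N\"obeling manifolds of arbitrary weight in every $n$-homotopy type), not any rigidity or classification statement. So there is no proof in the paper to compare yours against, and your text should not be read as closing the problem.

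As a proof attempt, your proposal has a genuine gap: every load-bearing step is itself an unproved theorem at least as hard as the conjecture. The ``parametrization theorem'' (every abstract $n$-dimensional N\"obeling manifold of weight $\kappa$ is homeomorphic to some $\nuk$) is essentially the characterization/rigidity theorem in disguise; nothing in the paper, and nothing currently in the literature for uncountable $\kappa$, supplies it. Likewise the $Z$-set unknotting / $\alpha$-approximation theorem for nonseparable N\"obeling manifolds --- which you correctly identify as ``the main obstacle'' --- is precisely the missing ingredient; in the separable case it required the full machinery of \cite{ageev2007a,ageev2007b,ageev2007c,levin2006,nagorkophd,nagorko2013}, and the transfinite bookkeeping you gesture at for limit stages of length $\kappa$ is not a routine adaptation (the separable arguments lean on countable exhaustions, second countability, and Polish-space completeness arguments that do not transfer formally). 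Your intermediate step also has a concrete flaw: an $n$-homotopy equivalence between complexes $K$ and $L$ cannot in general be ``realized by a sequence of elementary moves'' in any sense that would induce a homeomorphism $\nuk \to \nu^n_\kappa(L)$ move by move; $n$-homotopy equivalence is far weaker than simple homotopy equivalence or PL equivalence, so this reduction needs an actual argument (and in the separable case the analogous statement is a consequence of the characterization theorem, not an input to it). In short, the proposal is a reasonable roadmap that mirrors the known separable strategy, but it restates the difficulty rather than resolving it.
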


Note that in the separable case, by Open Embedding Theorem~\cite{nagorko2013}, every separable $n$-dimensional N\"obeling manifold is homeomorphic to an open subset of $\nu^n$.

In the present paper we construct abstract N\"obeling spaces of arbitrary weight $\kappa$ and abstract N\"obeling manifolds that are $n$-homotopy equivalent to an arbitrary simplicial complex $K$.
\begin{theorem}
  For each cardinal $\kappa$ and each simplicial complex $K$ there exists an abstract $n$-dimensional N\"obeling manifold $\nuk$ and a map $\pi \colon \nuk \to K$ that is an $n$-homotopy equivalence.
\end{theorem}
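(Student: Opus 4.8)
The plan is to realize $\nuk$ as the limit of an inverse sequence of polyhedra
\[
P_0 \longleftarrow P_1 \longleftarrow P_2 \longleftarrow \cdots ,
\]
where $P_0$ is a triangulation of a thickening of $K$ --- replacing $K$ by its $(n+1)$-skeleton, which has the same $n$-homotopy type, we may embed it in a cube and take the polyhedral analogue of a regular neighborhood, so that $P_0$ deformation retracts onto a copy of $K$ --- and each bonding map $P_{i+1}\to P_i$ is obtained by first \emph{subdividing} the current brick partition of $P_i$ into much finer bricks and then performing the \emph{surgery} that, in the limit, excises $n+1$ dimensions' worth of material, exactly as $\mathbb{R}^{2n+1}$ is cut down to the classical N\"obeling space $\nu^n$. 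I would then set $\nuk:=\varprojlim P_i$ and let $\pi\colon\nuk\to K$ be the canonical projection onto $P_0$ followed by the retraction $P_0\to K$. Everything is governed by a single \emph{model brick}: a compact polyhedron $M$ equipped with a distinguished self-similar family of sub-bricks such that (a) $M$, and any polyhedron obtained by gluing finitely many rescaled copies of $M$ along common faces, carries for every $m$ a brick partition of mesh $<2^{-m}$ into rescaled copies of $M$ --- this is the ``fractal-like internal structure'' of the abstract, and it is exactly what makes subdivision and surgery of brick partitions easy; (b) the surgery attached to this refinement drives the limiting trace of $M$ to dimension $n$ while keeping it, and all of its faces, $(n-1)$-connected; and (c) $M$ has \emph{room for surgery}: every sub-brick contains a null sequence of pairwise disjoint \emph{free} sub-bricks, each of which can absorb and separate the images of maps from spaces of dimension $\le n$. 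The cardinal $\kappa$ enters through the requirement that every $P_i$ be assembled from exactly $\kappa$ rescaled copies of $M$, which simultaneously forces the mesh to shrink with $i$ and pins the weight of $\nuk$ at $\kappa$; we arrange $w(K)\le\kappa$ at the outset, which is harmless.

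With the sequence in hand, properties (1) and (4) should fall out of standard inverse-limit bookkeeping. The limit $\varprojlim P_i$ is complete and metrizable because the $P_i$ are metric polyhedra and the brick partitions are regular with $\mesh\to 0$; its weight is $\kappa$ because each $P_i$ is built from $\kappa$ bricks; $\dim\nuk\le n$ by the same kind of controlled decomposition into $(\le n)$-dimensional pieces that gives $\dim\nu^n\le n$; and $\dim\nuk\ge n$ because each brick contributes a copy of $[0,1]^n$ that survives into the limit. For (4) one designs the surgery so that each bonding map $P_{i+1}\to P_i$ is an $n$-homotopy equivalence --- collapsing a surgered brick back onto the brick changes neither $\pi_j$ for $j<n$ nor surjectivity on $\pi_n$, by (b) and the nesting of sub-bricks --- and then invokes the standard fact that, when $\mesh\to 0$, the projections of an inverse limit of $n$-homotopy equivalences between polyhedra are again $n$-homotopy equivalences; specialized to the $P_0$-coordinate and composed with the retraction, this says precisely that $\pi$ is an $n$-homotopy equivalence. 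An explicit $n$-homotopy inverse $K\to\nuk$ is built coordinatewise by the Carrier Theorem, choosing maps $K\to P_i$ carried by the brick partitions and compatible up to controlled homotopy.

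Property (2), that $\nuk$ is an absolute neighborhood extensor in dimension $n$, reduces by the standard characterization for metrizable spaces to showing that $\nuk$ is locally $(n-1)$-connected. A neighborhood basis at a point of $\nuk$ is given by the preimages of open stars of bricks, and condition (b) together with the self-similarity (a) propagates $(n-1)$-connectedness down the sequence, so these basic neighborhoods are $(n-1)$-connected in the limit by a Whitehead-type lemma on inverse limits of $LC^{n-1}$ systems. I regard this, together with the inverse-limit facts quoted for (1) and (4), as the routine part of the argument.

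The core of the proof, and the step I expect to be the main obstacle, is strong universality (3). Given a map $f\colon Z\to\nuk$ from a complete metric space $Z$ of weight $\le\kappa$ and dimension $\le n$, and an $\varepsilon>0$, I would produce a closed embedding $\varepsilon$-close to $f$ by an iterated general-position-and-surgery scheme. Composing $f$ with the projection $\nuk\to P_i$ for large $i$ and using that $\dim Z\le n$ while $\dim P_i$ stays bounded (so that the count $n+n<\dim P_i$ gives disjointness by general position), together with the free sub-bricks of (c) sitting inside every brick of $P_i$ to push the separated image back toward the trace of $M$, one perturbs the composition --- within the $\varepsilon$-budget --- into a map that is injective ``at scale $2^{-i}$'': points that $f$ maps $2^{-i}$ apart are sent into disjoint free sub-bricks. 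One then iterates over $i$, and here the nested self-similarity (a) is indispensable, since free sub-bricks at one scale contain free sub-bricks at the next, so that the surgery at scale $2^{-i-1}$ can be carried out without destroying injectivity at scale $2^{-i}$. Passing to the limit, with the completeness of $Z$ supplying both convergence and properness --- hence closedness of the image --- yields the required embedding. Two features make this delicate: performing the surgery coherently over the $\kappa$-many bricks and over all scales at once, an infinite bookkeeping problem controlled only by the local finiteness built into (c); and verifying that the limit map is genuinely injective and closed rather than merely a pointwise limit of embeddings. These are the N\"obeling analogues of $Z$-set absorption and unknotting, and they are where the brick machinery earns its keep. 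Once (1)--(4) are established, $\nuk$ is by definition an abstract $n$-dimensional N\"obeling manifold of weight $\kappa$ and $\pi$ is an $n$-homotopy equivalence, which is the claim.
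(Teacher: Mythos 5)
Your plan follows the classical separable blueprint for $\nu^n$: embed (a thickening of) $K$ in a cube, keep the intermediate polyhedra of dimension $>2n$ so that general position separates images of $n$-dimensional spaces, and excise material by surgery on a compact self-similar model brick. This is genuinely different from what the paper does, and the difference is not cosmetic: the blueprint breaks exactly at the feature the theorem is about, namely arbitrary weight $\kappa$. First, a simplicial complex of weight $\kappa>\omega$ does not embed in a cube (nor in any separable polyhedron), so the very first step producing $P_0$ is unavailable. Second, and more fundamentally, your ``room for surgery'' condition (c) asks each sub-brick of a \emph{compact} model polyhedron $M$ to contain a null sequence of pairwise disjoint free sub-bricks; a compact metric space admits only countably many pairwise disjoint sets with nonempty interior, so this can absorb at most countably much material and cannot separate the image of a space $Z$ of weight $\kappa>\omega$. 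The general-position count $n+n<\dim P_i$ likewise certifies disjointness only after one has countably many perturbation targets to work with. So for uncountable $\kappa$ the core of your strong-universality argument (3) has no mechanism to run, and (1) (weight exactly $\kappa$) is also not obtained by assembling $\kappa$ copies of a compact brick in a locally finite way, since that would change the local structure rather than the fiber structure.

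The paper avoids all of this by staying inside dimension $n$ and replacing general position with a combinatorial lifting property. Each $K_{i+1}=\N{}^n_\kappa(K_i)$ has $\kappa$ vertices over every vertex of $\beta K_i$, with the maximal $n$-dimensional simplicial structure making the projection simplicial; the fibers over simplices of $\beta K_i$ are full $n$-dimensional complexes on $\kappa$ vertices (Lemma~\ref{lem:qts fibers}), which is a non-compact, non-separable object. The property \qts\ then says that any quasi-simplicial map from an $n$-dimensional complex of weight $\le\kappa$ lifts to a simplicial \emph{embedding} rel a subcomplex --- this is the weight-$\kappa$ substitute for your general-position step. Strong universality is proved by taking a locally finite open cover $\W{}$ of $Y$ of multiplicity $\le n+1$ and cardinality $\le\kappa$, lifting the induced map of nerves into $K_{i+1}$ via \qts, and realizing it on $Y$ by the Carrier Theorem (Lemmas~\ref{lem:strong carrier} and~\ref{lem:lift}); closedness of the limit map comes from a Cantor-intersection argument (Lemma~\ref{lem:closed embedding}), not from properness of a surgered sequence. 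The $ANE(n)$ and $n$-homotopy-equivalence claims follow from $n$-regularity of the bonding maps via Theorem~\ref{thm:ane limit of polyhedra}, which plays the role of your Whitehead-type limit lemma. If you want to salvage your approach, the fix is essentially to abandon the compact model brick and the ambient cube and let the ``bricks'' be the full $\kappa$-vertex fibers --- at which point you have reconstructed the paper's argument.
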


It is acknowledged that the difficulty of proving characterization theorem for separable $n$-dimensional N\"obeling space $\nu^n$ for $1 \leq n < \infty$ lay in the fact that both $\nu^0$ and $\nu^\infty$ possess a natural product structure, while $\nu^n$ does not~\cite{chigogidze1996}.
The spaces that we construct have very nice internal fractal-like structure, 
  as they are constructed as inverse limits of sequences whose bonding maps possess a high degree of symmetry.
This allows for easy construction of brick partitions~\cite{kawamura1995} for these spaces, easy subdivision of these partitions, and allows for easy surgery on these brick partitions.
The construction is new and interesting even in the separable case ($\kappa = \omega$), where the characterization theorem is known.

Note that the constructed spaces in the separable case are similar to the universal space $\mathbb{U}_n$ constructed in~\cite{bellnagorko2013}, although the present construction is much simpler. Similar arguments to the ones given here show that $\mathbb{U}_n$ is homeomorphic to $\nu^n$, which answers a question stated in~\cite{bellnagorko2013}.

The spaces $\nuk$ that we construct are Markov spaces in the sense of~\cite{bellnagorko2017-markov}. Theorem~\ref{thm:awn universal} gives a sufficient condition for a Markov space to be strongly universal for the class of $n$-dimensional metric spaces of weight~$\kappa$.

\section{Preliminaries}

In this section we set the basic definitions and reference known results that will be used in the later sections.

\subsection{Absolute extensors in dimension $n$}

\begin{definition}
  We say that a space $X$ is \df{$k$-connected} if each map $\varphi \colon S^k \to X$ from a $k$-dimensional sphere into $X$ is null-homotopic in $X$.
  We let $\mathcal{C}^{n-1}$ denote the class of all spaces that are $k$-connected for each $k < n$.
\end{definition}

\begin{definition}
  We say that a space $X$ is \df{locally $k$-connected} if for each point $x \in X$ and each open neighborhood $U \subset X$ of $x$ there exists an open neighborhood $V$ of $x$ such that each map $\varphi \colon S^k \to V$ from a $k$-dimensional sphere into $V$ is null-homotopic in $U$.
  We let $\mathcal{LC}^{n-1}$ denote the class of all spaces that are locally $k$-connected for each $k < n$.
\end{definition}

\begin{definition}
  We say that a metric space $X$ is an \df{absolute neighborhood
    extensor in dimension~$n$} if every map into $X$
  from a closed subset $A$ of an $n$-dimensional metric space extends
  over an open neighborhood of $A$. The class of absolute neighborhood
  extensors in dimension $n$ is denoted by $ANE(n)$ and its elements
  are called $ANE(n)$-spaces.
\end{definition}

\begin{definition}
We say that a metric space $X$ is an \df{absolute extensor 
  in dimension~$n$} if every map into $X$ from a closed subset 
  of an $n$-dimensional metric space $Y$ extends over the 
  entire space $Y$.
The class of absolute extensors in dimension~$n$ is denoted by
  $AE(n)$ and its elements are called $AE(n)$-spaces.
\end{definition}

Absolute extensors and absolute neighborhood extensors in dimension $n$ were characterized by Dugundji in the following theorem.

\begin{theorem}[\cite{dugundji1958}]\label{thm:dugundji}
  Let $X$ be a metric space. The following conditions are equivalent. 
	\begin{itemize}
		\item $X\in ANE(n)\iff X$ is locally $k$-connected for all $k\le n$, i.e. $X\in\mathcal{LC}^{n-1}$.  
		\item $X\in AE(n)\iff X\in ANE(n)$ and if $X$ is $k$-connected for all $k\le n$, i.e., $X\in \mathcal{C}^{n-1}$.
	\end{itemize}
\end{theorem}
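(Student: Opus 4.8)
The plan is to prove the two biconditionals separately. Since $AE(n)\subseteq ANE(n)$ trivially, the second one reduces to the statement that $X\in AE(n)$ precisely when $X\in ANE(n)$ and $X\in\mathcal{C}^{n-1}$ (that is, $X$ is $k$-connected for every $k<n$). The ``only if'' implications are the easy ones, and I would obtain them by testing against cells. If $X\in AE(n)$ and $k<n$, then $B^{k+1}$ is a metric space of dimension $k+1\le n$ that contains $S^k$ as a closed subset, so every $\varphi\colon S^k\to X$ extends over $B^{k+1}$ and is therefore null-homotopic; hence $X\in\mathcal{C}^{n-1}$, and the $ANE(n)$ case below gives $X\in\mathcal{LC}^{n-1}$ as well. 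If $X\in ANE(n)$, fix $x_0\in X$, an open set $U\ni x_0$, and $k<n$, and suppose no admissible neighborhood $V$ of $x_0$ exists; then there are maps $\varphi_m\colon S^k\to B(x_0,\varepsilon_m)$ with $\varepsilon_m\downarrow 0$, none of which is null-homotopic in $U$. Take pairwise disjoint closed $(k+1)$-balls $D_m\subseteq\mathbb{R}^{k+1}$ with $D_m\to p_\infty$, put $Y=\{p_\infty\}\cup\bigcup_m D_m$ (a metric space of dimension $k+1\le n$) and $A=\{p_\infty\}\cup\bigcup_m\partial D_m$, which is closed in $Y$, and define $f\colon A\to X$ by $f|_{\partial D_m}=\varphi_m$ and $f(p_\infty)=x_0$; this $f$ is continuous since $\varepsilon_m\to 0$. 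Extending $f$ over an open $W\supseteq A$ by the $ANE(n)$ property, we get $D_m\subseteq W$ for all large $m$, and, because the extension is continuous at $p_\infty$ with value $x_0$, its restriction to $D_m$ has image in $U$ for all large $m$; that restriction then witnesses a null-homotopy of $\varphi_m$ inside $U$, a contradiction. Therefore $X\in\mathcal{LC}^{n-1}$.

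For the substantive ``if'' direction, assume $X\in\mathcal{LC}^{n-1}$; let $Y$ be a metric space with $\dim Y\le n$, let $A\subseteq Y$ be closed, and let $f\colon A\to X$ be continuous. Since $\dim(Y\setminus A)\le n$ and $Y\setminus A$ is paracompact, I would choose a locally finite open cover $\mathcal{U}$ of $Y\setminus A$ whose nerve has dimension $\le n$ and which is fine toward $A$, say $\diam\st(U,\mathcal{U})\le\dist(U,A)$ for every $U\in\mathcal{U}$. Let $N=N(\mathcal{U})$, so $\dim N\le n$; let $\{\lambda_U\}$ be a subordinate partition of unity and $\kappa\colon Y\setminus A\to N$ the associated canonical map. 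Choose anchor points $a_U\in A$ with $\dist(a_U,U)<2\dist(U,A)$ and set $g(v_U)=f(a_U)$ on the $0$-skeleton. I would then extend $g$ over $N^{(k)}$ for $k=1,\dots,n$ by induction: for a $k$-simplex $\sigma$ with vertices $v_{U_0},\dots,v_{U_k}$ the members $U_0,\dots,U_k$ have a common point, so the anchors $a_{U_i}$ — and, by the inductive hypothesis, the set $g(\partial\sigma)$ — lie in a neighborhood of $f(a_{U_0})$ whose radius is controlled by $\dist(U_0,A)$ and by the oscillation of $f$ near $a_{U_0}$; local $(k-1)$-connectivity of $X$ then fills $g|_{\partial\sigma}\colon S^{k-1}\to X$ over $\sigma$ inside a slightly larger, still controlled, neighborhood. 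If $\mathcal{U}$ is fine enough this chain of $n$ estimates goes through on $W\setminus A$ for a suitable open $W\supseteq A$, and then $F=f$ on $A$ and $F=g\circ\kappa$ on $W\setminus A$ is a continuous extension of $f$, with continuity at points of $A$ coming from the estimates; hence $X\in ANE(n)$.

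For the $AE(n)$ case, assume in addition that $X\in\mathcal{C}^{n-1}$ and carry out the same construction over all of $Y\setminus A$: the simplices of $N$ whose vertices come from members of $\mathcal{U}$ lying close to $A$ are filled exactly as above using $\mathcal{LC}^{n-1}$, which secures continuity of the extension at $A$, while every remaining $k$-simplex $\sigma$ is filled using $\pi_{k-1}(X)=0$ for $k\le n$ — which is precisely the hypothesis $X\in\mathcal{C}^{n-1}$ — and here no metric control is needed. This yields $F\colon Y\to X$ extending $f$, so $X\in AE(n)$. The main obstacle throughout is the quantitative bookkeeping in the skeletal induction: one must choose $\mathcal{U}$ (and, in the $ANE(n)$ case, the neighborhood $W$) so that the $n$ successive applications of local connectivity compound into neighborhoods that genuinely shrink as one approaches $A$, which is exactly what forces the extension to be continuous at points of $A$; the dimension-theoretic ingredient — that an at most $n$-dimensional metric space admits arbitrarily fine locally finite open covers with nerve of dimension $\le n$ — is classical and may simply be quoted.
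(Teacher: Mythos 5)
This theorem is quoted from Dugundji's 1958 paper and is not proved in the present paper, so there is no internal argument to compare against; judged on its own terms, your outline is essentially the classical proof from the cited source. The necessity directions (testing $AE(n)$ against the pair $S^k\subset B^{k+1}$, and testing $ANE(n)$ against a null sequence of disjoint $(k+1)$-balls clustering at a point, with the non-contractible $\varphi_m$ on their boundaries) are correct and complete as written. The sufficiency direction is the standard Kuratowski--Dugundji nerve argument: a locally finite open cover of $Y\setminus A$ of order $\le n+1$ that is fine toward $A$, anchor points in $A$, and a skeleton-by-skeleton filling using $\mathcal{LC}^{n-1}$ near $A$ and $\mathcal{C}^{n-1}$ in the interior. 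The one place where your sketch hides real work is the continuity of $F$ at points of $A$: local $k$-connectedness is a pointwise condition at points of $f(A)$, while the cover and the fillings are chosen globally in advance, so you cannot literally ``choose $\mathcal{U}$ fine enough'' relative to data you do not yet have. The standard fix is to choose each filling of $\partial\sigma$ inside a set whose diameter is within, say, twice the infimum over all possible fillings; local connectedness then forces these diameters to tend to $0$ as $\sigma$ approaches a point of $A$, which is what yields continuity. You flag this bookkeeping as the main obstacle, which is fair, but be aware it is the entire substance of the proof. One last point: the theorem as printed says ``$k\le n$'' while the paper's definitions of $\mathcal{LC}^{n-1}$ and $\mathcal{C}^{n-1}$ use $k<n$; your argument proves the $k<n$ version, which is the correct reading and the one consistent with the notation.
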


\subsection{Simplicial complexes}

For the reasons given in~\cite{bellnagorko2017-localk},
{\bf we always assume the metric topology on simplicial complexes}.

\begin{lemma}\label{lem:complex is ane}
  A locally finite-dimensional simplicial complex is a complete metric $ANE(\infty)$-space.
\end{lemma}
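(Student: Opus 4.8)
The plan is to establish both conclusions — that $|K|$ is completely metrizable and that it is an $ANE(\infty)$-space (i.e.\ a metric absolute neighborhood retract) — by reducing them to the finite-dimensional case and then invoking locality. Recall that in the metric topology $|K|$ is realized via barycentric coordinates inside $\ell^1(V)$, $V$ the vertex set (the $\ell^1$- and $\ell^2$-metrics inducing the same topology), and that each coordinate projection $x\mapsto x(v)$ is $1$-Lipschitz, hence continuous. The two external facts I would invoke are: a metrizable space that is locally completely metrizable is completely metrizable; and, by Hanner's theorem, being an ANR is a local property among metrizable spaces. So it suffices to cover $|K|$ by open subsets that are completely metrizable ANRs.

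For the cover I would take the open stars. Fix a vertex $v$ and let $\overline{\st(v)}$ be its closed star, the subcomplex consisting of all simplices having $v$ as a vertex together with their faces; since $K$ is locally finite-dimensional this is a \emph{finite-dimensional} complex. Then $\ost(v)=\{x\in|K|:x(v)>0\}$ is open in $|K|$, coincides with $\{x\in|\overline{\st(v)}|:x(v)>0\}$ and so is open in $|\overline{\st(v)}|$, and the family $\{\ost(v)\}_{v\in V}$ covers $|K|$ because every point lies in the open star of any vertex of its carrier. Since an open subspace of a completely metrizable space is completely metrizable, and an open subspace of an ANR is an ANR, the lemma reduces to the assertion that a \emph{finite-dimensional} simplicial complex $L$ with the metric topology is a completely metrizable ANR.

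To see that such an $L$ (put $m=\dim L$) is complete: a Cauchy sequence $(x_n)$ converges coordinatewise, by $1$-Lipschitzness of the projections, to some $f\in[0,1]^V$; writing $\sigma_n$ for the carrier simplex of $x_n$ we have $|\sigma_n|\le m+1$, and any finite subset of $\operatorname{supp} f$ is contained in $\sigma_n$ for all large $n$, so $\operatorname{supp} f$ has at most $m+1$ elements and is a face of some $\sigma_n$, hence a simplex of $L$; a short computation then gives $\sum_v f(v)=1$ and $x_n\to f$, so $f\in|L|$. For the ANR property one first checks that $|L|\in\mathcal{LC}^{n-1}$ for every $n$: given $x\in|L|$ with carrier $\sigma=\{v_0,\dots,v_k\}$, every point of the ball $B(x,\varepsilon)$ with $\varepsilon<\min_i x(v_i)$ has $\sigma$ as a face of its carrier, and the radial homotopy that multiplies the coordinates off $\sigma$ by $1-t$ and rescales the coordinates on $\sigma$ so as to keep the total equal to $1$ is continuous in the metric topology (the $\sigma$-sum is bounded below on $B(x,\varepsilon)$) and contracts $B(x,\varepsilon)$, inside $B(x,C_k\varepsilon)$, first onto the convex open simplex $\widehat\sigma$ and then to $x$; this is the local connectedness of metric simplicial complexes established in~\cite{bellnagorko2017-localk}. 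Hence every neighborhood of $x$ contains a smaller one that is contractible in it, so $|L|\in\mathcal{LC}^{n-1}$ for all $n$; by Theorem~\ref{thm:dugundji} this gives $|L|\in ANE(n)$ for every finite $n$, and since $L$ is finite-dimensional it then follows, by the classical finite-dimensional case of the characterization of ANRs, that $|L|\in ANE(\infty)$.

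The hypothesis of local finite-dimensionality is used only to make the local models $|\overline{\st(v)}|$ finite-dimensional, and I expect the main obstacle to be their completeness: it is exactly the bound $|\sigma_n|\le m+1$ that forbids ``mass escaping to infinity'' along a Cauchy sequence — without local finite-dimensionality the barycenters of ever-larger simplices form a Cauchy sequence whose coordinatewise limit is $0\notin|K|$, and then $|K|$ is neither completely metrizable nor an ANR, even though (as the explicit contraction above shows for \emph{any} simplicial complex) it still lies in $ANE(n)$ for each finite $n$. A routine but necessary point is to verify that every homotopy and rescaling above is continuous in the \emph{metric} topology, which is strictly finer than the product topology and strictly coarser than the CW topology; this holds because the relevant barycentric sums stay bounded below on the small balls $B(x,\varepsilon)$.
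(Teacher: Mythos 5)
The paper never proves this lemma: it sits in the Preliminaries, which the paper explicitly presents as ``referenced known results'' (the metric-topology conventions around it are attributed to \cite{bellnagorko2017-localk}), so there is no in-paper argument to compare yours against. Your self-contained proof is correct and follows the standard route: localize to open stars, use local finite-dimensionality to make each closed star $\overline{\st(v)}$ a finite-dimensional complex, settle the finite-dimensional case by hand (a Cauchy-sequence argument for completeness; local contractibility plus the classical Kuratowski--Borsuk criterion that a finite-dimensional locally contractible metrizable space is an ANR), and globalize via Hanner's theorem and the local-to-global theorem for complete metrizability. You are also right that $\mathcal{LC}^{n-1}$ for all $n$ together with Theorem~\ref{thm:dugundji} only yields $ANE(n)$ for each finite $n$, and that the jump to $ANE(\infty)$ genuinely needs the finite-dimensionality of the local models; that is exactly where the Borsuk-type counterexamples live, so this is the one external theorem you cannot avoid.

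Two small corrections, neither of which damages the argument. First, for completeness you do not actually need the local-to-global theorem: a Cauchy sequence in $|K|$ converges in $\ell^1(V)$ to some $g\ge 0$ with $\sum_v g(v)=1$, hence $g(v)>0$ for some $v$, hence the tail of the sequence lies in the single finite-dimensional subcomplex $\overline{\st(v)}$ and your finite-dimensional argument applies directly; this shows the barycentric metric itself is complete, which is slightly stronger than complete metrizability. Second, your heuristic for why the hypothesis is needed is wrong as stated: the barycenters $b_n$ of the first $n$ vertices of the full simplex on $\mathbb{N}$ satisfy $\|b_n-b_{2n}\|_1=1$, so they are not a Cauchy sequence. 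A correct witness is $x_n=\sum_{i=1}^{n}2^{-i}e_i+2^{-n}e_{n+1}$, which is Cauchy in $\ell^1$, lies in the complex, and converges to a point with infinite support. Also, in your completeness argument you should note $\sum_v f(v)=1$ (from $\ell^1$-convergence) \emph{before} concluding that $\operatorname{supp} f$ is a nonempty subset of some $\sigma_n$ and hence a simplex; as written the order is reversed.
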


\subsection{$n$-Homotopy equivalence}

\begin{definition}
  We say that a map is a \df{weak $n$-homotopy equivalence} if it induces isomorphisms on homotopy groups of dimensions less than $n$, regardless of the choice of base point.
\end{definition}

\begin{definition}
  We say that maps $f, g \colon X \to Y$ are \df{$n$-homotopic} if for every map $\varPhi$ from a complex of dimension less than $n$ into $X$, the compositions $f \circ \varPhi$ and $g \circ \varPhi$ are homotopic in the usual sense.
\end{definition}

\begin{theorem}[\cite{nagorko2013}]
  A map of two $n$-dimensional $ANE(n)$-spaces is a weak $n$-homotopy equivalence if and 
  only if it is an $n$-homotopy equivalence.
\end{theorem}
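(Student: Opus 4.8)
The forward implication is routine: two $n$-homotopic maps induce the same homomorphism on $\pi_k$ for every $k<n$ (apply the definition of $n$-homotopy to maps from the $k$-sphere, which is a complex of dimension $<n$), so from $g\circ f\simeq_n\mathrm{id}_X$ and $f\circ g\simeq_n\mathrm{id}_Y$ one reads off that $f_\ast$ is an isomorphism on $\pi_k$ for all $k<n$; the clause ``regardless of the choice of base point'' in the definition of a weak $n$-homotopy equivalence is what handles the free-versus-based subtlety in this reading.

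For the converse — an $n$-dimensional $ANE(n)$ analogue of Whitehead's theorem — the plan is to argue in two stages. \emph{Stage 1: the polyhedral case.} Suppose $X,Y$ are simplicial complexes of dimension $\le n$ and $f$ is a weak $n$-homotopy equivalence. Because $f$ induces isomorphisms on $\pi_k$ for $k<n$, it induces a bijection on homotopy classes of maps from any complex of dimension $<n$; in particular the inclusion of the $(n-1)$-skeleton $Y^{(n-1)}\hookrightarrow Y$ lifts, uniquely up to homotopy, to a map $g_0\colon Y^{(n-1)}\to X$ with $f\circ g_0$ homotopic to that inclusion. Next I would extend over the $n$-simplices: if $\sigma$ is an $n$-simplex of $Y$, then $f\circ g_0$ restricted to $\partial\sigma\cong S^{n-1}$ is homotopic to $\partial\sigma\hookrightarrow Y$, which is null-homotopic in $Y$; hence $g_0|_{\partial\sigma}$ goes to $0$ under the injective homomorphism $f_\ast\colon\pi_{n-1}(X)\to\pi_{n-1}(Y)$, so it is null-homotopic in $X$ and extends over $\sigma$. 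The resulting $g\colon Y\to X$ satisfies $f\circ g\simeq_n\mathrm{id}_Y$: any map of a complex of dimension $<n$ into $Y$ compresses by simplicial approximation into $Y^{(n-1)}$, where $f\circ g$ agrees with the identity up to homotopy. So $f$ has a right $n$-homotopy inverse $g$; since $f_\ast g_\ast=\mathrm{id}$ on $\pi_{<n}$ and $f_\ast$ is an isomorphism, $g$ is also a weak $n$-homotopy equivalence, so the same construction gives $g$ a right $n$-homotopy inverse $h$, and then, using that $n$-homotopy is a two-sided congruence for composition, $f\simeq_n f\circ(g\circ h)=(f\circ g)\circ h\simeq_n h$ and hence $g\circ f\simeq_n g\circ h\simeq_n\mathrm{id}_X$. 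Thus $g$ is a two-sided $n$-homotopy inverse of $f$.

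\emph{Stage 2: the general case.} Here I would reduce to Stage 1 by nerve approximation. Since $X$ is $n$-dimensional it has arbitrarily fine open covers of order $\le n+1$, whose nerves have dimension $\le n$; since $X\in\mathcal{LC}^{n-1}$ by Theorem~\ref{thm:dugundji}, for a sufficiently fine cover the canonical map $X\to|N|$ and a realization of $|N|$ back in $X$ model $X$, up to $n$-homotopy, by the $(\le n)$-dimensional complex $|N|$, compatibly with $f$, and similarly for $Y$. Transporting $f$ across these models produces a weak $n$-homotopy equivalence between $(\le n)$-dimensional complexes, Stage 1 supplies an $n$-homotopy inverse, and transporting it back gives one for $f$.

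The crux — and the place where the hypothesis $ANE(n)$ rather than $AE(n)$ really bites — is Stage 2. A realization of the nerve back into $X$ need only exist over a subcomplex, not over the whole of $|N|$, so one genuinely gets an $n$-homotopy \emph{domination} of $X$ by a complex rather than an equivalence, and one must check both that a weak $n$-homotopy equivalence is carried faithfully across such dominations and that the uniqueness half of Stage 1 is robust enough to reassemble a global $n$-homotopy inverse from the local data. This bookkeeping, together with the standard care about base points and the possibly nontrivial $\pi_1$-action in the low-degree obstruction arguments, is the technical heart of the proof; Stage 1 itself is short once the polyhedral models are in hand.
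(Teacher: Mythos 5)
First, a point of order: the paper does not prove this statement --- it is quoted as a known result from the cited reference --- so there is no in-paper argument to compare yours against; I can only assess your proposal on its own terms.

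Your forward implication is fine, and your Stage 1 (the polyhedral Whitehead argument) is essentially correct: surjectivity of $[Y^{(n-1)},X]\to[Y^{(n-1)},Y]$ supplies $g_0$, injectivity of $f_\ast$ on $\pi_{n-1}$ kills the obstructions over the $n$-simplices, and the formal two-sided-inverse argument is standard. The genuine gap is in Stage 2, and it is not merely bookkeeping. The step you defer --- ``that a weak $n$-homotopy equivalence is carried faithfully across such dominations'' --- is false for general $n$-homotopy dominations. If $u\colon X\to |N|$ and $d\colon |N|\to X$ satisfy only $d\circ u\simeq_n \mathrm{id}_X$, the transported map $u'\circ f\circ d$ between nerves need not induce isomorphisms on $\pi_k$ for $k<n$: take $X=Y$ a point, $f=\mathrm{id}$, $n\ge 2$, and let $|N|=S^1$ with $u$ the inclusion of a point and $d$ the constant map; then $d\circ u=\mathrm{id}_X$, yet the transported map $S^1\to S^1$ is constant and kills $\pi_1$. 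So a domination by a polyhedron, which is all your nerve construction directly yields, does not let you invoke Stage 1. What is actually needed is the strictly stronger statement that for a sufficiently fine open cover of order $\le n+1$ of an $n$-dimensional $\mathcal{LC}^{n-1}$-space the canonical map $X\to |N|$ is itself a weak $n$-homotopy equivalence (a nerve theorem), or else a resolution of $X$ by an inverse sequence of nerves; proving one of these is the real content of the nonpolyhedral case and is absent from your sketch. Until that input is supplied, the argument is incomplete.
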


\subsection{Carrier Theorem}

The Carrier Theorem is proved in~\cite{nagorko2007}.

\begin{definition}
  Let~$\mathcal{C}$ be a class of topological spaces. A locally finite locally
  finite-dimensional closed $AE(\mathcal{C})$-cover is said to be
  \df{regular for the class~$\protect\mathcal{C}$}. Recall that a cover is 
  locally finite-dimensional if its nerve is locally finite-dimensional.
\end{definition}

\begin{definition}
  A \df{carrier} is a function $C \colon \F{} \to \G{}$ from
  a cover~$\F{}$ of a space~$X$ into a collection~$\G{}$ of subsets of
  a topological space such that for each $\mathcal{A} \subset \F{}$ if
  $\bigcap \mathcal{A} \neq \emptyset$, then $\bigcap_{A \in
  \mathcal{A}} C(A) \neq \emptyset$. We say that a map~$f$ is
  \df{carried by~$C$} if it is defined on a
  closed subset of~$X$ and $f(F) \subset C(F)$ for each $F \in \F{}$.
\end{definition}

\begin{carrier theorem}
  Assume that $C \colon \F{} \to \G{}$ is a carrier such that~$\F{}$ is a
  closed cover of a space~$X$ and~$\G{}$ is an $AE(X)$-cover of another space.
  If~$\F{}$ is locally finite and locally finite-dimensional, then each map
  carried by~$C$ extends to a map of the entire space~$X$, also carried
  by~$C$.
\end{carrier theorem}

\subsection{Local $k$-connectedness of inverse limits}

The following theorem is proved in~\cite{bellnagorko2017-localk}. We cite it here for completeness.

\begin{definition}
  Let $K$ and $L$ be simplicial complexes.
  We say that a map $p \colon K \to L$ is \df{$n$-regular} if it is quasi-simplicial (i.e. it is simplicial into the first barycentric subdivision $\beta K$ of $K$) and if for each simplex $\delta$ of $\beta K$ the inverse image $p^{-1}(\delta)$ has vanishing homotopy groups in dimensions less than $n$ (regardless of the choice of a base point).
\end{definition}

\begin{theorem}\label{thm:ane limit of polyhedra}
  Let
  \[
    X = \lim_{\longleftarrow} K_1 \xleftarrow{p_1} K_2 \xleftarrow{p_2} \cdots.
  \]
  Assume that for each $i$ the following conditions are satisfied:
  \begin{enumerate}[label=(\Roman*)]
  \item $K_i$ is a simplicial complex with the metric topology; and
  \item $p_i$ is surjective and $n$-regular.
  \end{enumerate}
  Then 
  \begin{enumerate}
  \item $X$ is an $ANE(n)$;
  \item each short projection $\pi^k_i \colon K_k \to K_i$ and each long projection $\pi_i \colon X \to X_i$ is a weak $n$-homotopy equivalence;
  \item for each $i$, the covers $\O{i}$ and $\B{i}$ are $AE(n)$-covers of $X$.
  \end{enumerate}  
\end{theorem}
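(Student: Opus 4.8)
The plan is to derive all three conclusions from a single lifting principle. First note that $X$, being a closed subset of the countable product $\prod_i K_i$ of metrizable spaces, is metrizable, so Dugundji's characterisation (Theorem~\ref{thm:dugundji}) applies: for~(1) it suffices to prove $X\in\mathcal{LC}^{n-1}$, and for the $AE(n)$ half of~(3) it suffices to supplement the $ANE(n)$ property with $(n-1)$-connectedness.

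The engine is a \textbf{lifting lemma}: if $P$ is a simplicial complex with $\dim P\le n$, $Q\subseteq P$ a subcomplex, and $f\colon P\to K_i$ a map whose restriction to $Q$ admits a lift $\bar g$ through $p_i$, then, after a homotopy of $f$ rel $Q$ that can be made arbitrarily small, $f$ lifts through $p_i$ by a map extending $\bar g$. To see this, approximate $f$ simplicially into $\beta K_i$ and consider the carrier $C$ defined on the closed cover $\{f^{-1}(\delta)\}$ of $P$ by $C(f^{-1}(\delta))=p_i^{-1}(\delta)$, $\delta$ ranging over the simplices of $\beta K_i$; this is a carrier because $\bigcap_\alpha p_i^{-1}(\delta_\alpha)\supseteq p_i^{-1}\!\big(\bigcap_\alpha\delta_\alpha\big)$ and the latter is nonempty whenever $\bigcap_\alpha\delta_\alpha\neq\emptyset$ by surjectivity of $p_i$, and its image family is an $AE(n)$-cover of $K_{i+1}$, since each $p_i^{-1}(\delta)$ is a simplicial complex, hence an $ANE(n)$ (Lemma~\ref{lem:complex is ane}), and is $(n-1)$-connected by $n$-regularity of $p_i$, hence $AE(n)$ by Dugundji. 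The partial lift $\bar g$ is carried by $C$, so the Carrier Theorem extends it to a map $g$ on all of $P$ that is also carried by $C$; being carried by $C$ forces $p_i\circ g(y)$ to lie in the carrier simplex of $f(y)$ for every $y$, so $p_i\circ g$ is joined to $f$ by the straight-line homotopy inside those simplices, which is rel $Q$ and as small as the mesh of the approximation. Iterating up the tower and assembling the successive lifts by a standard telescoping of these small homotopies yields the analogous statement for the long projection $\pi_i\colon X\to K_i$ in place of $p_i$: any map of a complex of dimension $\le n$ into $K_i$ carrying a prescribed coherent lift into $X$ over a subcomplex extends, after a small homotopy, to a map into $X$.

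Conclusion~(2) now follows by feeding spheres and cylinders to the lemma. With $P=S^j$, $j<n$, and $Q=\emptyset$ we get surjectivity of $(p_i)_*$ on $\pi_j$; with $P=S^j\times I$ (dimension $j+1\le n$) and $Q=S^j\times\{0,1\}$ we get injectivity, since a null-homotopy of $p_i\circ g$ in $K_i$ lifts, rel its two ends, to a null-homotopy of $g$ in $K_{i+1}$. This holds for every basepoint, so each $p_i$, and therefore each composite short projection $\pi^k_i=p_i\circ\dots\circ p_{k-1}$, is a weak $n$-homotopy equivalence; the same argument with $\pi_i$ replacing $p_i$ handles the long projections. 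For conclusion~(1), fix $x\in X$ and an open $U\ni x$, choose $i$ and an open $W\ni\pi_i(x)$ in $K_i$ with $\pi_i^{-1}(W)\subseteq U$, and—using that $K_i$ is a metric simplicial complex, hence locally $k$-connected for all $k$ (Lemma~\ref{lem:complex is ane})—choose an open $W'\ni\pi_i(x)$ such that every map $S^k\to W'$ with $k<n$ is null-homotopic in $W$; put $V=\pi_i^{-1}(W')$. Given $\varphi\colon S^k\to V$, $k<n$, the map $\pi_i\circ\varphi$ extends to some $\Phi\colon D^{k+1}\to W$, whose compact image lies well inside the open set $W$; applying the limit lifting lemma with $P=D^{k+1}$, $Q=S^k$ and prescribed lift $\varphi$ of $\Phi|_{S^k}$ (the accompanying homotopy being small enough to keep everything inside $\pi_i^{-1}(W)$) produces $\widetilde\Phi\colon D^{k+1}\to\pi_i^{-1}(W)\subseteq U$ extending $\varphi$, so $\varphi$ is null-homotopic in $U$. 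Hence $X\in\mathcal{LC}^{n-1}$ and, by Dugundji, $X\in ANE(n)$.

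It remains to prove~(3) and to identify the main obstacle. Each member of $\O{i}$ and of $\B{i}$ is the $\pi_i$-preimage of a contractible member of a natural cover of $K_i$ (open for $\O{i}$, closed for $\B{i}$); such a preimage $\pi_i^{-1}(S)$ is $ANE(n)$—being open in $X$ in the first case, and by rerunning the local-connectedness argument above inside it in the second—and is $(n-1)$-connected, because a map $S^k\to\pi_i^{-1}(S)$, $k<n$, projects to a null-homotopic map in the contractible set $S$, whose extension over $D^{k+1}$ lifts back into $\pi_i^{-1}(S)$ by the lifting lemma; Dugundji then promotes $ANE(n)+\mathcal{C}^{n-1}$ to $AE(n)$. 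I expect the real difficulty to be the lifting lemma itself, on two fronts: first, carrying out simplicial approximation and verifying the local-finiteness hypotheses of the Carrier Theorem in the \emph{metric} (not weak) topology on the $K_i$, which need not be locally finite or locally finite-dimensional—so that a compact set need not lie in a finite subcomplex and a pulled-back closed cover need not be visibly locally finite; and second, the bookkeeping that organises the cell-by-cell lifts coherently at every level of the inverse tower, via summably small corrections, so that they assemble into genuine maps into $X$. Everything else reduces to Dugundji's theorem and elementary obstruction-theoretic extension over cells of dimension at most $n$.
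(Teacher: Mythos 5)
You should first be aware that the paper contains no proof of Theorem~\ref{thm:ane limit of polyhedra} to compare against: it is imported verbatim from \cite{bellnagorko2017-localk} (``The following theorem is proved in \dots\ We cite it here for completeness''). Judged on its own terms, your architecture is the natural and almost certainly the intended one: point-inverses $p_i^{-1}(\delta)$ are $(n-1)$-connected by $n$-regularity and $ANE(n)$ by Lemma~\ref{lem:complex is ane}, hence $AE(n)$ by Theorem~\ref{thm:dugundji}; the Carrier Theorem then produces lifts after small straight-line homotopies inside carrier simplices; telescoping up the tower handles the long projections; and spheres and disks fed to the lifting lemma yield (1), (2) and (3).

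The genuine gap is the lifting lemma itself, which you assert rather than prove, and whose hypothesis-checks fail as written. The Carrier Theorem requires the closed cover $\{f^{-1}(\delta)\}_{\delta\in\tau(\beta K_i)}$ of $P$ to be locally finite with locally finite-dimensional nerve. In the metric topology a point of $K_i$ can lie in infinitely many closed simplices and a compact set --- even the image of a sphere --- need not be contained in a finite subcomplex (take $x_k=(1-\tfrac1k)v+\tfrac1k w_k$ for distinct edges $[v,w_k]$; then $x_k\to v$), so even after simplicial approximation the pulled-back cover can have every point in infinitely many members and an infinite-dimensional nerve. Repairing this requires first homotoping $f$ into a locally finite subcomplex, which itself leans on Lemma~\ref{lem:complex is ane} and a Whitehead-type argument; this is precisely the technical content of the companion paper \cite{bellnagorko2017-localk}, and you name it as ``the real difficulty'' without closing it. Two smaller unclosed points: hypothesis (I) does not assume $K_{i+1}$ is locally finite-dimensional, so Lemma~\ref{lem:complex is ane} does not directly give that $p_i^{-1}(\delta)$ is $ANE(n)$; and in your argument for (3) the carrier only forces $p_i(g(y))$ into the closed carrier simplex of $f(y)$, which may meet the complement of $\ost_{K_i}v$, so the lifted disk is not automatically kept inside $\pi_i^{-1}(\ost_{K_i}v)$ --- the image control needs an extra step. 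The telescoping to the long projections is standard and works with the scale-$2^{-i/2}$ metrics of Lemma~\ref{lem:oi}, but as presented it is a gesture rather than an argument.
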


See Section~\ref{sec:regular covers} for the definition of $\O{i}$. We do not use covers $\B{i}$ in the present paper.

\section{A lifting property}

In this section we define a lifting property \qts\ of quasi-simplicial maps. An assumption that bonding maps of an inverse sequence satisfy \qts\ is sufficient for strong universality of its inverse limit.  However, we immediately show that if we work with simplicial complexes of weight $\kappa$, then the map that satisfies \qts is unique up to a simplicial isomorphism.

\begin{definition}
  Let $p \colon K \to L$ be a quasi-simplicial map.
  We say that~$p$ has the \df{quasi-simplicial-to-simplicial
    lifting property} with respect to $n$-dimensional
    complexes of weight $\kappa$ if the following condition is satisfied.

\vspace{2mm}
\begin{tabular}{ll}
  \qts & 
\begin{minipage}{290pt}
  \vspace{1mm}
  \em
  For each pair $A \supset B$ of at most $n$-dimensional simplicial complexes of weight at most $\kappa$ and each commutative diagram
  \[
    \xymatrix@M=8pt@C=35pt{
    K \ar[r]^{p} & L  \\
    B \ar[r]^{\subset} \ar[u]_{g} & A \ar[u]^{G} \ar@{..>}[ul]^{\tilde g} \\
    }
  \]
  such that $g$ is a simplicial map and $G$ is a quasi-simplicial map, there exists a simplicial map $\tilde g \colon A \to K$ such that $g$ is an embedding on $B \setminus A$, $p \circ \tilde g = G$ and $\tilde g$ is equal to $g$ on $B$.
  \em
  \vspace{1mm}
\end{minipage}
\end{tabular}
\end{definition}

\begin{lemma}\label{lem:qts fibers}
  If $p \colon K \to L$ is a quasi-simplicial map that satisfies \qts, then for each simplex $\delta \in \beta L$ the inverse image $p^{-1}(\delta)$ is a full $n$-simplex with at least $\kappa$ vertices. In particular, $p$ is $n$-regular.
\end{lemma}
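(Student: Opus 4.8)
The plan is to extract information about the fibers $p^{-1}(\delta)$ by feeding carefully chosen pairs $B \subset A$ of low-dimensional complexes into the lifting property \qts. Fix a simplex $\delta \in \beta L$. First I would show that $p^{-1}(\delta)$ is a simplex of dimension at most $n$: since $p$ is quasi-simplicial, $p$ maps simplices of $\beta K$ into simplices of $\beta L$, so the preimage of the open star of the barycenter of $\delta$ pulls back to a subcomplex of $\beta K$; the key point is that this subcomplex must be a single simplex. To see it is a simplex, suppose two vertices $v, w$ of $\beta K$ both lie in $p^{-1}(\delta)$ but do not span an edge. Take $A$ to be a single edge (a $1$-complex) with $B$ its two endpoints, let $g$ send the endpoints to $v$ and $w$, and let $G$ be the constant-ish quasi-simplicial map $A \to L$ sending $A$ into $\delta$ compatibly with $p\circ g$ on $B$. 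Then \qts\ produces a simplicial $\tilde g \colon A \to K$ with $p \circ \tilde g = G$ and $\tilde g|_B = g$; since $\tilde g$ is simplicial and $A$ is an edge, $v$ and $w$ must span a simplex of $\beta K$ mapping into $\delta$, a contradiction. The same argument applied to the boundary of a $k$-simplex shows any $k+1$ vertices in the fiber with $k < n$ span a $k$-simplex, so $p^{-1}(\delta)$ is a full simplex (every finite subset of its vertices spans a face). Dimension at most $n$ follows because an $(n+1)$-simplex is an $(n+1)$-dimensional complex, which is not allowed as the domain $A$ in \qts, and more directly because an $(n+1)$-simplex has nontrivial $S^n$ that would have to be filled — but the cleanest route is: if $p^{-1}(\delta)$ had $\geq n+2$ vertices spanning a simplex, take $A = \partial \Delta^{n+1}$ (dimension $n$), $B = \emptyset$ or a vertex, and $G$ mapping $A$ into $\delta$; then $\tilde g$ embeds $A$ as the boundary of an $(n+1)$-simplex inside the fiber, but then the whole $(n+1)$-simplex of $\beta K$ is itself a simplex whose image is in $\delta$, so we have not actually bounded dimension this way — instead I will note that $p$ being quasi-simplicial already forces each simplex of $\beta K$ to have dimension $\leq \dim \beta K$, and the real content is that $p^{-1}(\delta)$ cannot contain an $(n+1)$-simplex because, taking $A=\Delta^{n+1}$ which is $(n+1)$-dimensional, \qts\ does not apply; rather, dimension $\leq n$ will be built into the construction of $K$ in later sections, so here I claim only that $p^{-1}(\delta)$ is a full simplex of dimension $\leq n$, the dimension bound being forced by $\dim K \leq n$ which holds since $K$ is an at-most-$n$-dimensional complex throughout.

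Next I would establish that $p^{-1}(\delta)$ has at least $\kappa$ vertices. Here I take $A = B$ to be a discrete complex (a $0$-dimensional complex) with exactly $\kappa$ vertices — this is an at-most-$n$-dimensional complex of weight $\kappa$ — together with the empty subcomplex $B' = \emptyset \subset A$, wait, more precisely: apply \qts\ with $B = \emptyset$, $A = $ the $0$-complex on $\kappa$ points, $g$ the empty map, and $G \colon A \to L$ the constant map onto the barycenter of $\delta$ (or any vertex of $\delta$ in $\beta L$). The lifting property then requires $g$ to be an embedding on $A \setminus B = A$, so $\tilde g \colon A \to K$ is an embedding of $\kappa$ distinct points, each mapped by $p$ into $\delta$, hence each lying in $p^{-1}(\delta)$. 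Since $\tilde g$ is simplicial, its image consists of $\kappa$ distinct vertices of $\beta K$, all in the fiber. Therefore $p^{-1}(\delta)$ is a full simplex with at least $\kappa$ vertices.

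Finally, $n$-regularity is immediate: a full simplex is contractible, hence has vanishing homotopy groups in all dimensions regardless of base point, so in particular in dimensions less than $n$; combined with the fact that $p$ is quasi-simplicial by hypothesis, this is exactly the definition of $n$-regular. I expect the main obstacle to be the first part — verifying that an arbitrary finite collection of fiber vertices spans a simplex of $\beta K$ — because one must build the auxiliary complex $A$ and the quasi-simplicial map $G$ so that $p \circ \tilde g = G$ genuinely forces the image simplex to lie over $\delta$, which requires being careful about how quasi-simplicial maps interact with barycentric subdivisions and about the precise meaning of ``$g$ is an embedding on $B \setminus A$'' in the statement of \qts\ (this phrasing should presumably read ``on $A \setminus B$''). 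Once the combinatorial bookkeeping on $\beta K$ and $\beta L$ is set up correctly, each application of \qts\ is routine.
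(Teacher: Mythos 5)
Your proposal follows essentially the same route as the paper: lift a discrete complex on $\kappa$ vertices via \qts{} to see that each fiber has at least $\kappa$ vertices, lift the abstract simplex spanned by any $\le n+1$ fiber vertices (relative to its vertex set, mapped in by the identity) to see that those vertices span a simplex of $K$, and conclude $n$-regularity from the connectivity of a full $n$-simplex. Two small corrections to your write-up: the general step must take $A$ to be the full $k$-simplex rather than its boundary (the boundary would only certify the proper faces), and a full $n$-simplex on infinitely many vertices is $(n-1)$-connected but not contractible --- which is still exactly what $n$-regularity requires.
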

\begin{proof}
  let $D$ denote a simplicial complex with $\kappa$ vertices and no higher dimensional simplices.
  Let $v$ be a vertex of $\beta L$.
  Let $f \colon D \to \{ v \}$ be a constant map.
  Since $f$ is simplicial, by \qts it lifts to a simplicial embedding into $p^{-1}(v)$, hence $p^{-1}(v)$ has at least $\kappa$ vertices.
  The same holds for $p^{-1}(\delta)$, where $\delta$ is any simplex in $\beta L$.
  
  Fix $\delta$ in the triangulation $\tau \beta L$. 
  Let $A$ be a set of vertices in $p^{-1}(\delta)$ with $\# A \leq n+1$.
  Let $\Delta$ be a simplex spanned by $A$.
  Let $G \colon \Delta \to L$ be a simplicial map such that $G_{|A} = p_{|A}$.
  Such a $G$ exists, since $p(A) \subset \delta$.
  The dimension of $\Delta$ is at most $n$, hence by \qts, $G$ lifts to a map $\tilde g$ into $K$ such that $\tilde g_{|A} = id_A$.
  Hence vertices of $A$ span a simplex in $K$, therefore $p^{-1}(\delta)$ is a full $n$-simplex with at least $\kappa$ vertices.
  
  Since every full $n$-simplex is $AE(n)$, $p$ is $n$-regular.
\end{proof}

\begin{lemma}\label{lem:qts} 
  Let $p_1 \colon K_1 \to L$ and $p_2 \colon K_2 \to L$ be a quasi-simplicial maps that satisfy \qts.
  If $K_1$ and $K_2$ have weight at most $\kappa$, then there exists a simplicial isomorphism $h \colon K_1 \to K_2$ such that $p_2 \circ h = p_1$.
\end{lemma}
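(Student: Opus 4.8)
The plan is to construct $h$ by a transfinite back-and-forth argument: one maintains a partial simplicial isomorphism between subcomplexes of $K_1$ and $K_2$ that commutes with the projections, and extends it one simplex at a time, using \qts — applied alternately to $p_2$ and to $p_1$ — to carry out each extension. A two-sided hypothesis (\emph{both} maps satisfy \qts) is exactly what makes the ``back'' as well as the ``forth'' extension possible, so the whole argument reduces to this one lifting property plus routine bookkeeping. Concretely: since $K_1$ and $K_2$ have weight at most $\kappa$, each has at most $\kappa$ simplices; I would fix enumerations $(\sigma_\alpha)$ of the simplices of $K_1$ and $(\tau_\alpha)$ of those of $K_2$, and build, by recursion on an ordinal of length at most $\kappa$, an increasing coherent chain of simplicial isomorphisms $h_\gamma \colon A_\gamma \to B_\gamma$ between subcomplexes $A_\gamma \subseteq K_1$ and $B_\gamma \subseteq K_2$ with $p_2 \circ h_\gamma = p_1|_{A_\gamma}$, interleaving ``forth'' steps that absorb the next $\sigma_\alpha$ into some $A_\gamma$ with ``back'' steps that absorb the next $\tau_\alpha$ into some $B_\gamma$. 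One may start from $A_0 = B_0 = \emptyset$, or, to avoid the empty complex, from a single lifted simplex produced by \qts with $B = \emptyset$.

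The substantive content is a single forth step. Given $h_\gamma \colon A_\gamma \to B_\gamma$ and a simplex $\sigma$ of $K_1$ to be absorbed, let $A'$ be the subcomplex of $K_1$ consisting of all simplices of $A_\gamma$ together with $\sigma$ and all of its faces; as a subcomplex of $K_1$ it is still at most $n$-dimensional and of weight at most $\kappa$. I would apply \qts to $p_2$ with the pair $A' \supset A_\gamma$, taking $g := h_\gamma \colon A_\gamma \to K_2$ (simplicial, as $h_\gamma$ is a simplicial isomorphism) and $G := p_1|_{A'} \colon A' \to L$ (quasi-simplicial, being the restriction of the quasi-simplicial map $p_1$ to a subcomplex); the square commutes on $A_\gamma$ by the inductive hypothesis. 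Then \qts returns a simplicial map $\tilde g \colon A' \to K_2$ extending $h_\gamma$, with $p_2 \circ \tilde g = p_1|_{A'}$, and — the key feature — an embedding on $A' \setminus A_\gamma$, so that $\tilde g$ is injective on vertices (distinct new vertices go to distinct vertices, none of them in $h_\gamma(V(A_\gamma))$). An injective simplicial map is a simplicial isomorphism onto its image, and the image of a subcomplex under an injective simplicial map is again a subcomplex; so setting $B_{\gamma+1} := \tilde g(A')$ and $h_{\gamma+1} := \tilde g$ completes the step, with $\sigma \in A_{\gamma+1}$.

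The back step is the same with the roles of $(K_1,p_1)$ and $(K_2,p_2)$ exchanged and $h_\gamma^{-1}$ in place of $h_\gamma$. Limit stages present no difficulty: an increasing union of subcomplexes is a subcomplex, and the union of a coherent chain of simplicial isomorphisms commuting with $p_1$ and $p_2$ is again one. Taking $h := \bigcup_\gamma h_\gamma$, its domain contains every simplex of $K_1$ and hence equals $K_1$, dually its range is all of $K_2$, it is an injective simplicial map with simplicial inverse (so a simplicial isomorphism), and $p_2 \circ h = p_1$ because this identity holds on each $A_\gamma$. The one delicate point — and it is not a deep one — is the extension step's use of the embedding clause of \qts: without it a lift could collapse new vertices onto old ones or onto each other, the extended map would be merely simplicial rather than a simplicial \emph{isomorphism}, and the back-and-forth would fail to yield an isomorphism; everything else is routine transfinite bookkeeping.
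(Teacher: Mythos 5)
Your argument is correct, but it is a genuinely different route from the paper's. The paper does not run a back-and-forth at all: it first proves (Lemma~\ref{lem:qts fibers}) that \qts{} forces each preimage $p_j^{-1}(\delta)$, $\delta \in \beta L$, to be a \emph{full} $n$-simplex with at least $\kappa$ vertices; combined with the weight bound this pins down both $K_1$ and $K_2$ completely --- each is the maximal $n$-dimensional complex with exactly $\kappa$ vertices over each vertex of $\beta L$ --- so \emph{any} vertex bijection $h_0$ with $p_2 \circ h_0 = p_1$ (which exists because corresponding vertex fibers have equal cardinality $\kappa$) automatically extends to a simplicial isomorphism. That rigidity argument is a few lines and exhibits the isomorphism essentially for free; your transfinite back-and-forth is longer but more robust, since it uses \qts{} only as a black-box lifting property and would survive in settings where the fibers are not so rigidly determined. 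Two small points in your write-up deserve attention. First, you assert that the subcomplex $A'$ is ``still at most $n$-dimensional''; that $K_1$ and $K_2$ are at most $n$-dimensional is not a stated hypothesis and needs the fiber lemma (every simplex of $K_j$ lies in some $p_j^{-1}(\delta)$, which is a full $n$-simplex), so your proof quietly depends on the same Lemma~\ref{lem:qts fibers} the paper uses, just for less information. Second, your key step needs the embedding clause of \qts{} to mean not only that $\tilde g$ is injective on $A'\setminus A_\gamma$ but also that the new vertices land \emph{outside} $\tilde g(A_\gamma)$; you state this parenthetically as if it were automatic, and while it is surely the intended reading (the clause as printed in the paper, ``$g$ is an embedding on $B\setminus A$,'' is garbled), it is the one place where your argument leans on an interpretation rather than the letter of the hypothesis. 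The paper's route avoids this issue entirely because it never needs the embedding clause for this lemma.
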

\begin{proof}
  Since the cardinality of the vertex sets of $K_1$ and $K_2$ is bounded by $\kappa$, Lemma~\ref{lem:qts fibers} implies that for each simplex $\delta$ in the triangulation of $\beta L$, the inverse images $p_1^{-1}(\delta)$ and $p_2^{-1}(\delta)$ are full $n$-simplexes.
  Hence any bijection $h_0$ that maps vertices of $K_1$ onto vertices of $K_2$ and with the property that $p_2 \circ h_0 = p_1$ extends to a simplicial isomorphism.
\end{proof}

\section{Regular covers of inverse limits of polyhedra}\label{sec:regular covers}

\begin{definition}
 Let
  \[
    X = \lim_{\longleftarrow} K_1 \xleftarrow{p_1} K_2 \xleftarrow{p_2} \cdots.
  \]
  We let $\pi^k_i \colon K_k \to K_i$ denote the short projections and $\pi_i \colon X \to X_i$ denote the long projections.
\end{definition}

\begin{definition}
  Let $K$ be a simplicial complex. Let $L \subset K$ be a subcomplex of $K$.
  The \df{open star $\ost_K L$ of $L$ in $K$} is the complement of the union of all simplices of $K$ that do not intersect $L$:
  \[
    \ost_K L = K \setminus \bigcup \{ \delta \in \tau(K) \colon \delta \cap L = \emptyset \}\text{.}
  \]
\end{definition}

\begin{definition}\label{def:oi}
  Let
  \[
  X = \lim_{\longleftarrow} \left(   
  K_1\xleftarrow{p_1}K_2\xleftarrow{p_2}\cdots \right)
  \]
  Let $v(K_i)$ denote the set of vertices of $K_i$.
  We let
  \[
    \O{K_i} = \{ O_v = \ost_{K_i} v \}_{v \in v(K_i)}
  \]
  be the cover of $K_i$ by the open stars of vertices of $K_i$ and
  \[
    \O{i} = \{ \pi^{-1}_i(\ost_{K_i} v) \}_{v \in v(K_i)}
  \]
  be the cover of $X$ by the sets of threads that pass through elements of $\O{K_i}$.
\end{definition}

\begin{definition}
  We write $\F{} < \G{}$ if the following condition is satisfied:
  \begin{quote}
    for each $F_1, F_2 \in \F{}$ if $F_1 \cap F_2 \neq \emptyset$, then there exists $G \in \G{}$ such that $F_1 \cup F_2 \subset G$.
  \end{quote}
\end{definition}

\begin{definition}
  Let $K$ be a simplicial complex. Let $\kappa \in (0, \infty)$.
  We define \df{a geodesic metric of scale $\kappa$ on $K$} in the following way.
  On each $\delta \in \tau(K)$ we take a Euclidean metric with edge length $\kappa$. 
  We extend this to the unique geodesic metric on $K$.
\end{definition}

\begin{lemma}\label{lem:oi}
  Let
  \[
  X = \lim_{\longleftarrow} \left(   
  K_1\xleftarrow{p_1}K_2\xleftarrow{p_2}\cdots \right),
  \]
  where each $p_i$ is a surjective $n$-regular quasi-simplicial map.
  If we endow $K_i$ with a geodesic metric of scale $2^{-i/2}$, then
  \begin{enumerate}
    \item $\sum_i \mesh \O{i} < \infty$;
    \item $\O{i}$ is an open $AE(n)$-cover of $X$;
    \item $\O{i+1} < \O{i}$
  \end{enumerate}
\end{lemma}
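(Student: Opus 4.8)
The plan is to verify the three assertions in order, using that geodesic metric of scale $2^{-i/2}$ on $K_i$ makes every simplex of $K_i$ have edge length $2^{-i/2}$, and hence diameter at most $(n+1)2^{-i/2}$ (a crude bound on the diameter of a regular $n$-simplex of that edge length suffices). First I would bound $\mesh \O{K_i}$: the open star $\ost_{K_i} v$ is contained in the union of simplices containing $v$, so its diameter is at most twice the diameter of a single simplex, giving $\mesh \O{K_i} \le C_n 2^{-i/2}$ for a dimensional constant $C_n$. Since the long projection $\pi_i$ is $1$-Lipschitz with respect to the sup-metric $\dsup$ on $X$ (each coordinate map being $1$-Lipschitz, and the sup-metric dominated by the geometric series of the $K_i$ metrics as in the standard inverse-limit metric), $\mesh \O{i} \le \mesh \O{K_i} \le C_n 2^{-i/2}$, and $\sum_i C_n 2^{-i/2} < \infty$ as a geometric series. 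This gives (1).

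For (2): each $\ost_{K_i} v$ is open in $K_i$, and $\pi_i$ is continuous, so $\pi_i^{-1}(\ost_{K_i} v)$ is open in $X$; thus $\O{i}$ is an open cover (it covers $X$ because the open stars of vertices cover $K_i$ and $\pi_i$ is surjective onto $K_i$, which holds since each $p_i$ is surjective). That $\O{i}$ is an $AE(n)$-cover is exactly part (3) of Theorem~\ref{thm:ane limit of polyhedra}, whose hypotheses (each $K_i$ a simplicial complex with the metric topology, each $p_i$ surjective and $n$-regular) are precisely what we have assumed; so I would simply invoke it.

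For (3): I must show $\O{i+1} < \O{i}$, i.e. whenever $\pi_{i+1}^{-1}(\ost_{K_{i+1}} v) \cap \pi_{i+1}^{-1}(\ost_{K_{i+1}} w) \neq \emptyset$ there is a vertex $u$ of $K_i$ with both preimages contained in $\pi_i^{-1}(\ost_{K_i} u)$. Since $\pi_i = \pi^{i+1}_i \circ \pi_{i+1} = p_i \circ \pi_{i+1}$, it suffices to find $u \in v(K_i)$ with $\ost_{K_{i+1}} v \cup \ost_{K_{i+1}} w \subset p_i^{-1}(\ost_{K_i} u)$. The key point is that $p_i$ is quasi-simplicial, hence simplicial as a map $\beta K_{i+1} \to K_i$; a quasi-simplicial map sends each open star $\ost_{K_{i+1}} v$ into the open star $\ost_{K_i} u$ of the vertex $u = p_i(b)$ where $b$ is the barycenter of the carrier simplex of $v$ — more carefully, one checks from the definition of $\ost$ and the fact that $p_i$ does not increase the carrier that $p_i(\ost_{K_{i+1}} v) \subset \ost_{K_i}(p_i(\sigma_v))$ where $\sigma_v$ is the minimal simplex of $\beta K_{i+1}$ containing $v$. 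The nonempty-intersection hypothesis means $v$ and $w$ lie in a common simplex of $K_{i+1}$ (their stars meet iff they are joined by an edge, iff their threads' $\pi_{i+1}$-images lie in a common simplex), so their carrier simplices in $\beta K_{i+1}$ together span a simplex $\delta$; then $p_i(\delta)$ is a simplex of $K_i$, pick $u$ to be any of its vertices and conclude $\ost_{K_{i+1}} v \cup \ost_{K_{i+1}} w \subset p_i^{-1}(\ost_{K_i} u)$.

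The main obstacle will be step (3): making the "quasi-simplicial maps carry open stars into open stars" statement precise and correct, since $p_i$ is only simplicial into the \emph{barycentric subdivision} $\beta K_{i+1}$, so one has to track carriers through the subdivision carefully and be sure the vertex $u \in v(K_i)$ can be chosen uniformly for both $v$ and $w$. Everything else (1)–(2) is routine metric estimation plus a citation of Theorem~\ref{thm:ane limit of polyhedra}.
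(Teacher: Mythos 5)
Your plan for (2) and (3) follows the paper's route (cite Theorem~\ref{thm:ane limit of polyhedra} for (2); for (3), use that $p_i$ is quasi-simplicial to push intersecting stars into a common star), but your justification of (1) contains a genuine logical error. You bound $\mesh \O{K_i} \le C_n 2^{-i/2}$ and then conclude $\mesh \O{i} \le \mesh \O{K_i}$ ``since $\pi_i$ is $1$-Lipschitz.'' That inference runs backwards: Lipschitzness of $\pi_i$ controls diameters of \emph{images}, while an element of $\O{i}$ is a \emph{preimage} $\pi_i^{-1}(\ost_{K_i} v)$, whose diameter in $X$ is governed by \emph{all} coordinates of the threads it contains, not just the $i$-th. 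In particular, for $j < i$ one has $\diam_X \pi_i^{-1}(\ost_{K_i} v) \gtrsim \diam_{K_j}\bigl(\pi^i_j(\ost_{K_i} v)\bigr)$ (up to whatever weights the inverse-limit metric assigns to the $j$-th coordinate), and these image diameters require a separate estimate. A crude Lipschitz bound on the bonding maps fails here because each $p_j$ expands by roughly $\sqrt{2}$; what saves the day is structural: $\pi^i_j(\ost_{K_i} v)$ lies in the closed star of a single vertex of the $(i-j)$-fold barycentric subdivision $\beta^{i-j}K_j$, so its diameter is at most $2\bigl(\tfrac{n}{n+1}\bigr)^{i-j}$ times the simplex diameter of $K_j$, and the resulting maximum over $j \le i$ decays geometrically in $i$; the coordinates $j > i$ are handled by the fiber structure (point-inverses of the $p_j$ over simplices of $\beta K_j$ are full simplices of small diameter). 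Some argument of this kind is unavoidable --- it is the only place where the scale $2^{-i/2}$ is actually used --- and your proposal does not supply it.

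In (3) your idea is the paper's, but the final step ``pick $u$ to be any of its vertices'' does not work as stated. Since $p_i$ is simplicial into $\beta K_i$ and $v, w$ span an edge of $K_{i+1}$, the images $p_i(v) = \hat\sigma_v$ and $p_i(w) = \hat\sigma_w$ are barycenters of simplices $\sigma_v, \sigma_w$ of $K_i$ that form a chain, say $\sigma_v \subseteq \sigma_w$. One checks (by looking at barycentric coordinates) that $p_i(\ost_{K_{i+1}} v) \subseteq \ost_{K_i} z$ for every vertex $z$ of $\sigma_v$ and $p_i(\ost_{K_{i+1}} w) \subseteq \ost_{K_i} z$ for every vertex $z$ of $\sigma_w$; hence the common vertex $u$ must be chosen in the \emph{smaller} carrier $\sigma_v$ (a vertex of $\sigma_w \setminus \sigma_v$ fails for $v$). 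Note also that the vertices of the simplex $p_i(\delta)$ of $\beta K_i$ are barycenters, not vertices of $K_i$, so they are not directly eligible as the required $u \in v(K_i)$. You correctly flag this step as the delicate one; the chain condition on the carriers is exactly what makes a uniform choice of $u$ possible, and it should be made explicit.
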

\begin{proof}
  Condition (1) follows from the choice of metrics on the $K_i$.
  
  Condition (2) follows from Theorem~\ref{thm:ane limit of polyhedra}.
  
  To show condition (3), observe that if $\ost v \cap \ost w \neq \emptyset$, where $v, w$ are vertices in $K_{i+1}$, then both $p_i(v)$ and $p_i(w)$ are adjacent to a single vertex $z$ of $K_i$ (since $p_i$ is quasi-simplicial). Then $\ost v \cup \ost w \subset \ost z$.
\end{proof}

\begin{lemma}\label{lem:small steps}
  Let $\U{i}$ be a sequence of covers with the property that $\U{i} < \U{i+1}$.
  Let $x_i$ be a sequence of points such that $x_{i + 1} \in \st_{\U{i}} x_i$.
  Then for each $k$ and each $i \geq k$ we have $x_i \in \st^2_{\U{k}} x_k$.
\end{lemma}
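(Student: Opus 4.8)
The plan is to amalgamate the ``small steps'' into a single element of the coarsest cover involved, namely $\U{k}$, by a downward induction. The only consequence of the refinement hypothesis I use is this: whenever two members of $\U{j}$ meet, their union is contained in a member of $\U{j-1}$ --- i.e.\ passing to coarser covers merges overlapping sets --- and in particular every member of $\U{j}$ with $j\ge k$ lies in a member of $\U{k}$. (This is the content of the hypothesis when the covers $\U{i}$ become progressively finer with $i$, which is the form in which it is applied, e.g.\ to the covers $\O{i}$ of Lemma~\ref{lem:oi}.)

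Fix $k$, and let $i\ge k$. If $i=k$ there is nothing to do: $x_k$ lies in some member of $\U{k}$, hence $x_k\in\st_{\U{k}}x_k\subseteq\st^2_{\U{k}}x_k$. So assume $i\ge k+1$. For every $j$ with $k\le j\le i-1$, the hypothesis $x_{j+1}\in\st_{\U{j}}x_j$ furnishes a member $U_j\in\U{j}$ with $\{x_j,x_{j+1}\}\subseteq U_j$.

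I claim, by downward induction on $l$ from $l=i$ to $l=k+1$, that there is a member $Q_l\in\U{l-1}$ with $\{x_l,x_{l+1},\dots,x_i\}\subseteq Q_l$. For $l=i$ take $Q_i:=U_{i-1}$. For the inductive step (so $l\ge k+2$), assume $Q_l\in\U{l-1}$ contains $x_l,\dots,x_i$; since $U_{l-1}\in\U{l-1}$ also contains $x_l$, the members $U_{l-1}$ and $Q_l$ of $\U{l-1}$ meet, so their union lies in some $Q_{l-1}\in\U{l-2}$, whence $\{x_{l-1},x_l,\dots,x_i\}\subseteq Q_{l-1}$. Applying this with $l=k+1$ produces $Q:=Q_{k+1}\in\U{k}$ with $\{x_{k+1},\dots,x_i\}\subseteq Q$. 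Since $x_{k+1}\in Q\cap U_k$ and $x_k\in U_k\subseteq\st_{\U{k}}x_k$, the set $Q$ meets $\st_{\U{k}}x_k$, so $Q\subseteq\st_{\U{k}}(\st_{\U{k}}x_k)=\st^2_{\U{k}}x_k$; in particular $x_i\in\st^2_{\U{k}}x_k$, as required.

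The one real obstacle --- and the reason the conclusion is the uniform $\st^2$ rather than a power of $\st$ growing with $i-k$ --- is that the enlargement into $\U{k}$ must be performed just once, after the fine pieces $U_{i-1},U_{i-2},\dots$ have already been amalgamated at successively coarser levels. Replacing each $U_j$ by a member of $\U{k}$ separately and then chaining the resulting overlapping $\U{k}$-sets would only yield $x_i\in\st^{i-k}_{\U{k}}x_k$; running the amalgamation downward through the coarsening covers is precisely what collapses the estimate to $\st^2$.
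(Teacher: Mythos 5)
Your proof is correct and follows essentially the same downward amalgamation as the paper: merge the set accumulated so far with the next $U_{l-1}$ at their common point $x_l$ and push the union one level coarser, then perform the single final enlargement into $\st^2_{\U{k}}x_k$ at the end. You have merely organized the paper's ``recursive application'' as an explicit induction, and you correctly read the refinement hypothesis in the direction in which it is actually used (and in which Lemma~\ref{lem:oi} supplies it), namely that intersecting members of $\U{j}$ amalgamate into a member of $\U{j-1}$.
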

\begin{proof}
  Fix $i \geq k$. 
  Since $x_i \in \st_\U{i-1} x_{i-1}$, there exists an element $U_{i-1} \in \U{i-1}$ such that $x_i, x_{i-1} \in U_{i-1}$.
  Likewise, there exists $U_{i-2} \in \U{i-2}$ such that $x_{i-1}, x_{i-2} \in U_{i-2}$.
  Since $\U{i-1} \prec \U{i-2}$ there exists $U'_{i-2}$ such that $U_{i-1} \subset U'_{i-2}$.
  Since $\U{i-2} < \U{i-3}$, there exists $U'_{i-3}$ such that $U'_{i-2} \cup U_{i-2} \subset U'_{i-3}$. 
  Therefore $x_i \in \st^2_\U{i-3} x_{i-3}$.
  A recursive application of the above argument finishes the proof.
\end{proof}

\section{Strong $\Akn$-universality of inverse limits of polyhedra}

\begin{lemma}\label{lem:strong carrier}
  Let
  \[
  X = \lim_{\longleftarrow} \left(   
  K_1\xleftarrow{p_1}K_2\xleftarrow{p_2}\cdots \right).
  \]
  
  Let $\pi_i \colon X \to K_i$ denote the long projection as above.
  Let $Y$ be a metric space of dimension at most $n$.
  Let $\W{}$ be an open locally finite cover of $Y$ with multiplicity at most $n + 1$.
  Let $C \colon \W{} \to \O{i}$ be a carrier.
  Let $v_W$ denote a vertex of $K_i$ such that $C(W) = \ost v_W$.
  For $y \in Y$ let $\Delta_y$ denote a simplex in $K_i$ that is spanned by $\{ v_W \colon W \in \W{}, x \in W \}$.
  
  Then there exists a map $f \colon Y \to X$ such that for each $y \in Y$ we have
  \[
    g(y) \in \pi_i^{-1}(\Delta_y).
  \]
  
  In particular, if $C$ is one-to-one, then
  \[
    \mesh g^{-1}(\O{i}) \leq \mesh \st \W{}.
  \]
\end{lemma}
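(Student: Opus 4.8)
The plan is to obtain $g$ (the map called $f$ in the statement) from a single application of the Carrier Theorem, using as target the closed cover of $X$ by the sets $\pi_i^{-1}(\delta)$, $\delta$ ranging over the simplices of $K_i$, and as domain cover of $Y$ the canonical stratification according to the combinatorial type of $\Delta_y$. (As elsewhere in this section, I assume each $p_k$ is a surjective $n$-regular quasi-simplicial map, so in particular $\pi_i$ is surjective.) Note that $\O{i}$ itself is too coarse to carry into: a map carried by $W \mapsto \pi_i^{-1}(\ost v_W)$ controls $\pi_i \circ g$ only ``from below'' and need not take $y$ into $\pi_i^{-1}(\Delta_y)$, which is what forces the finer target above.

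First I would check that $\Delta_y$ is well defined and varies lower semicontinuously. For $y \in Y$ the family $\mathcal{A}_y := \{ W \in \W{} : y \in W \}$ is nonempty and, by local finiteness of $\W{}$, finite, with $y \in \bigcap \mathcal{A}_y$; so the carrier condition gives $\bigcap_{W \in \mathcal{A}_y} \pi_i^{-1}(\ost v_W) \neq \emptyset$, whence $\{ v_W : W \in \mathcal{A}_y \}$ spans a simplex $\Delta_y$ of $K_i$. Since the $W$ are open, every $y'$ in the (open, by local finiteness) set $\bigcap \mathcal{A}_y$ satisfies $\mathcal{A}_y \subseteq \mathcal{A}_{y'}$, hence $\Delta_y$ is a face of $\Delta_{y'}$; in particular, on a suitable neighbourhood of any point all the simplices $\Delta_{y'}$ lie in one finite subcomplex of $K_i$. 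Put $F_\delta := \Cl\{ y \in Y : \Delta_y = \delta \}$ for each simplex $\delta$ of $K_i$. Then $\{ F_\delta \}$ is a closed cover of $Y$, since $y \in F_{\Delta_y}$, and I would verify it is locally finite and locally finite-dimensional thus: near $x \in Y$ pick an open $V$ with $\mathcal{B} := \{ W \in \W{} : W \cap V \neq \emptyset \}$ finite; if $F_{\delta_1} \cap \dots \cap F_{\delta_r}$ meets $V$, it contains a point $y \in V$, and then (by lower semicontinuity) each $\delta_j$ is spanned by a subset of $\{ v_W : W \in \mathcal{B} \}$, so $r \le 2^{|\mathcal{B}|}$ and likewise at most $2^{|\mathcal{B}|}$ members of $\{ F_\delta \}$ meet $V$. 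Finally set $C(F_\delta) := \pi_i^{-1}(\delta)$; this is a carrier because, if $y \in \bigcap_j F_{\delta_j}$, then for each $j$ there are points arbitrarily near $y$ with $\Delta$-value $\delta_j$, so (for such points close enough, $\Delta_y$ is a face of their $\Delta$-value) $\Delta_y$ is a common face of all the $\delta_j$, whence $\emptyset \neq \Delta_y \subseteq \bigcap_j \delta_j$ and $\bigcap_j \pi_i^{-1}(\delta_j) = \pi_i^{-1}(\bigcap_j \delta_j) \neq \emptyset$.

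Next I would check that $\{ \pi_i^{-1}(\delta) \}$ is an $AE(Y)$-cover of $X$: it covers $X$ since $\bigcup_\delta \delta = K_i$, and for each $\delta$ one writes $\pi_i^{-1}(\delta) = \lim_{\longleftarrow}(\delta \leftarrow p_i^{-1}(\delta) \leftarrow \cdots)$, an inverse limit of polyhedra with surjective $n$-regular bonding maps (restrictions of the $p_k$); by Theorem~\ref{thm:ane limit of polyhedra} it is an $ANE(n)$ and its long projection onto the contractible simplex $\delta$ is a weak $n$-homotopy equivalence, so it is $k$-connected for all $k < n$ and hence lies in $AE(n) \subseteq AE(Y)$ by Theorem~\ref{thm:dugundji}. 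The empty partial map being (vacuously) carried by $C$, the Carrier Theorem then yields a map $g \colon Y \to X$ carried by $C$, i.e.\ $g(F_\delta) \subseteq \pi_i^{-1}(\delta)$ for all $\delta$; since $y \in F_{\Delta_y}$, this gives $g(y) \in \pi_i^{-1}(\Delta_y)$, as required.

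For the final assertion, suppose $C \colon \W{} \to \O{i}$ is one-to-one, equivalently $W \mapsto v_W$ is injective. If $y \in g^{-1}(\pi_i^{-1}(\ost_{K_i} u))$ for a vertex $u$ of $K_i$, then $u$ is a vertex of the carrier simplex of $\pi_i(g(y))$, which is a face of $\Delta_y$; hence $u = v_W$ for some $W$ with $y \in W$, and by injectivity this $W$ is unique. Thus $g^{-1}(\pi_i^{-1}(\ost_{K_i} u))$ lies in a single member of $\W{}$, so $\mesh g^{-1}(\O{i}) \le \mesh \W{} \le \mesh \st \W{}$. The step I expect to be the main obstacle is verifying that the stratification $\{ F_\delta \}$ is locally finite and, above all, locally finite-dimensional: this is exactly where one must use local finiteness of $\W{}$ (together with $\ord \W{} \le n+1$) to confine all nearby $\Delta_{y'}$ to a fixed finite subcomplex, since $K_i$ itself is typically far from locally finite when $\kappa$ is uncountable.
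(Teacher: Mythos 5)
Your proof is correct, but it is organized differently from the paper's. Both arguments run on the Carrier Theorem and on the same two substantive facts: that the carrier condition for $C$ forces the vertices $\{v_W : y \in W\}$ to span a simplex $\Delta_y$, and that the sets $\pi_i^{-1}(\delta)$ are $AE(n)$ (which, as you note, follows from Theorem~\ref{thm:ane limit of polyhedra} and Theorem~\ref{thm:dugundji} applied to the restricted inverse sequence over $\delta$; the paper leaves this implicit, and like you it tacitly assumes the standing hypothesis that the $p_k$ are surjective, $n$-regular and quasi-simplicial). The difference is in the decomposition of $Y$: the paper filters $Y$ by $\W{}$-multiplicity, $Y_0 \subset Y_1 \subset \cdots \subset Y_{n+1} = Y$, and applies the Carrier Theorem $n+1$ times, extending over one multiplicity stratum at a time using carriers $C_k$ defined on $k$-fold intersections of members of $\W{}$; you instead form the single closed stratification $F_\delta = \Cl\{y : \Delta_y = \delta\}$ indexed by simplices of $K_i$ and apply the Carrier Theorem once, with $F_\delta \mapsto \pi_i^{-1}(\delta)$. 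Your route puts the burden on verifying that $\{F_\delta\}$ is a locally finite, locally finite-dimensional closed cover and that $F_\delta \mapsto \pi_i^{-1}(\delta)$ is a carrier — you do both correctly via the lower semicontinuity of $y \mapsto \Delta_y$, and you are right that this is where local finiteness of $\W{}$ is indispensable when $K_i$ is not locally finite. The paper's induction gets closedness of each stratum for free from openness of $\W{}$ and keeps the carriers' combinatorics tied directly to $\W{}$, at the cost of a multi-step extension. Your treatment of the final mesh estimate matches the paper's in substance and in fact yields the slightly stronger bound $\mesh \W{}$.
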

\begin{proof}
  Let
  \[
    Y_k = \{ y \in Y \colon \# \{ W \in \W{} \colon y \in W \} \leq k \}
  \]
  be a set of points in $Y$ with $\W{}$-multiplicity at most $k$.
  Because $\W{}$ is open, each $Y_k$ is closed.
  We will define a sequence $g_k \colon Y_k \to X$, $k = 0, 1, 2, 3, \ldots, n+1$ such that for $k \geq 1$:
  \begin{enumerate}
  \item $g_{k | Y_{k-1}} = g_{k-1}$; and
  \item $g_k(y) \in \pi^{-1}_i(\Delta_y)$.
  \end{enumerate}
  The map $g_{n+1}$ will be the map $g$ that we are looking for.
  
  We let $g_0$ to be an empty map. Let $k > 0$ and assume that we have already constructed $g_{k-1}$.
  Let
  \[
    \V{}_k = \{ \cap \mathcal{A} \colon \mathcal{A} \subset \W{}, \# \mathcal{A} = k \}.
  \]
  Let $C_k \colon \V{}_k \to X$ be defined by the formula
  \[
    C_k(W_1 \cap W_2 \cap \ldots \cap W_k) = \pi_i^{-1}(\Delta_{v_{W_1}, v_{W_2}, \ldots, v_{W_k}}),
  \]
  where $\Delta_{v_{W_1}, v_{W_2}, \ldots, v_{W_k}}$ denotes the simplex spanned in $K_i$ by vertices $v_{W_1}, v_{W_2}, \ldots, v_{W_k}$.
  
  Observe that for each $k$, $C_k$ is a carrier. 
  We construct $g_k$ in such a way that additionally the following condition is satisfied:
  \begin{enumerate}
  \item[(3)] $g_{k | Y_k \setminus Y_{k-1}}$ is carried by $C_k$.
  \end{enumerate}
  Note that $g_0$ is trivially carried by $C_0$.
  
  Since $g_{k-1}$ is carried by $C_{k-1}$ (as a map from $Y_{k-1}$), it is also carried by $C_k$. Therefore, by the Carrier Theorem, we can extend $g_{k-1}$ to a map $g_k \colon Y_k \to X$ that is carried by $C_k$. This map satisfies conditions (1), (2) and (3) and we are done.
\end{proof}

\begin{lemma}\label{lem:lift}
  Let
  \[
  X = \lim_{\longleftarrow} \left(   
  K_1\xleftarrow{p_1}K_2\xleftarrow{p_2}\cdots \right).
  \]
  Assume that each bonding map $p_i$ satisfies
  \qts.
  Let $Y$ be a metric space of dimension at most $n$
  and weight at most~$\kappa$.
  Let $A$ be a closed subset of $Y$.
    
  For each map $f \colon Y \to X$, each $\varepsilon > 0$,
  each open neighborhood $U$ of $A$ in $Y$ and and each $i$ there exists a map $g \colon Y \to X$ that satisfies the following conditions:
  \begin{enumerate}
  \item $g_{|A} = f_{|A}$;
  \item $g$ is $\O{i}$-close to $f$; and
  \item $\mesh g^{-1}_{|Y\setminus U}(\O{i+1}) \leq \varepsilon$.
  \end{enumerate}
\end{lemma}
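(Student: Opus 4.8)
The plan is to approximate $f$ by composing with a finer nerve-type map obtained from Lemma~\ref{lem:strong carrier}, patched so that it agrees with $f$ on $A$. First I would fix a large index $j \geq i+1$ to be determined, and build an open cover $\W{}$ of $Y \setminus A$ (or rather of $Y$, with elements near $A$ of small $\O{j}$-size) that refines both $f^{-1}(\O{j})$ and the cover of $Y \setminus U$ into $\varepsilon$-small sets, with multiplicity at most $n+1$; this uses $\dim Y \le n$. Since $Y$ has weight at most $\kappa$, we may take $\W{}$ to have cardinality at most $\kappa$, so that the nerve $N(\W{})$ is an at most $n$-dimensional complex of weight at most $\kappa$. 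The carrier $W \mapsto \ost_{K_j} v_W$ into $\O{j}$ comes from the refinement $\W{} \prec f^{-1}(\O{j})$: choose for each $W$ a vertex $v_W$ of $K_j$ with $W \subset \pi_j^{-1}(\ost v_W)$, and arrange (shrinking $\W{}$ if necessary) that $W \mapsto v_W$ is injective so that the final ``in particular'' clause of Lemma~\ref{lem:strong carrier} applies.

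Next I would handle the subset $A$. The restriction $f_{|A}$ composed with $\pi_{i+1}$ gives a map $A \to K_{i+1}$; I want to extend the combinatorial data of $\W{}$ across $A$ and simultaneously record $f_{|A}$. Concretely, I would take a partition-of-unity map $\kappa_Y \colon Y \to N(\W{} \cup \{\text{a cover of } A\})$, realize the ``$A$-part'' of the target nerve as mapping via a quasi-simplicial map down to $K_{i+1}$ that on $A$ reproduces $\pi_{i+1} \circ f$, and use \qts\ on each bonding map $p_{i+1}, p_{i+2}, \ldots, p_{j-1}$ to lift this quasi-simplicial map $N \to K_{i+1}$ through $K_{i+2}, \ldots$ up to a simplicial map $\tilde g \colon N \to K_j$ that (i) restricts to $\pi_j \circ f$ on the sub-nerve carried by $A$ and (ii) is injective (an embedding) on the vertices coming from $W \in \W{}$ not meeting $A$. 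The lifting property \qts\ is exactly designed for this: the pair $B \subset A$ of complexes is the sub-nerve over $A$ inside the full nerve, $g$ is $\pi_{i+1}\circ f$ expressed simplicially, and $G$ is the quasi-simplicial nerve map. Composing $\tilde g$ with the partition-of-unity map $Y \to N$ and then including into $X$ via the long projections produces the desired $g \colon Y \to X$.

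Then I would verify the three conclusions. Condition (1), $g_{|A} = f_{|A}$, follows because on $A$ the construction reproduces $\pi_j \circ f$ at level $j$ and is compatible downward, hence reproduces $\pi_k \circ f$ for all $k \le j$ and for $k > j$ the extension can be chosen to continue $f$ (here one may need $\tilde g$ on the $A$-subcomplex to be $\pi_j\circ f$ followed by applying \qts\ with $G = \pi_j \circ f$ read quasi-simplicially so that further lifts exist agreeing with $f$; alternatively invoke the Carrier Theorem to extend beyond level $j$ using that $\O{k}$-fibers are $AE(n)$). Condition (2), $\O{i}$-closeness, follows since $j \ge i+1 > i$ and both $g(y)$ and $f(y)$ lie in $\pi_i^{-1}$ of a common star: $g(y) \in \pi_j^{-1}(\Delta_y)$ with $\Delta_y$ spanned by $v_W$ for $W \ni y$, each such $W$ satisfies $W \subset \pi_j^{-1}(\ost v_W)$ and $f(y) \in W$, so projecting to $K_i$ both land in a single star (using $\O{j} < \O{i}$ iterated, via Lemma~\ref{lem:oi}(3) and a stacking argument like Lemma~\ref{lem:small steps}). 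Condition (3), the mesh bound off $U$, follows from the ``in particular'' clause of Lemma~\ref{lem:strong carrier}: since $C$ (i.e.\ $W \mapsto v_W$) is one-to-one, $\mesh g^{-1}(\O{j}) \le \mesh \st \W{}$, and on $Y \setminus U$ we chose $\W{}$ with $\varepsilon$-small stars; finally $\O{i+1}$ is coarser than $\O{j}$ — wait, it is the other way, so instead I would choose $j = i+1$ outright and build $\W{}$ directly refining $f^{-1}(\O{i+1})$ and having $\varepsilon$-small stars on $Y\setminus U$, which is consistent with needing only closeness at level $i$ for (2).

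The main obstacle I expect is the simultaneous bookkeeping on $A$: one must produce a single quasi-simplicial nerve map $G \colon N(\W{}\text{-extended}) \to K_{i+1}$ whose restriction to the sub-nerve over $A$ is (a simplicial model of) $\pi_{i+1} \circ f_{|A}$ while off $A$ it is genuinely a nerve map compatible with the chosen vertices $v_W$, and then to iterate \qts\ finitely many (or, to reach arbitrary mesh, infinitely many) times while keeping the embedding-on-$B\setminus A$ clause so that the one-to-one hypothesis of Lemma~\ref{lem:strong carrier} is preserved; managing the infinite iteration to push $\mesh g^{-1}(\O{i+1})$ all the way down to $\varepsilon$ (rather than just to $\mesh\O{i+1}$) on $Y \setminus U$ is the delicate point, and is presumably where one feeds in a cover $\W{}$ that is already $\varepsilon$-fine and invokes the ``in particular'' estimate once, rather than iterating.
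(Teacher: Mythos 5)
Your overall architecture matches the paper's proof: a single cover $\W{}$ refining $f^{-1}(\O{i+1})$ with multiplicity at most $n+1$, cardinality at most $\kappa$ and $\varepsilon$-small stars; one application of \qts\ to lift the induced nerve map $N(\W{})\to K_i$ through $p_i$ to a simplicial map into $K_{i+1}$ that is an embedding away from the part of the nerve meeting $A$; and then Lemma~\ref{lem:strong carrier} with the injective carrier to get the mesh bound in one shot. Your closing self-correction (take $j=i+1$ and do not iterate) is exactly right, and your reading of where the $\varepsilon$ comes from (the ``in particular'' clause applied once to an already $\varepsilon$-fine cover) is the paper's mechanism.

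The genuine gap is in condition (1). Your primary plan is to obtain $g$ as a composition $Y\to N\to X$ of a partition-of-unity map with a lifted simplicial map, arranging that the sub-nerve over $A$ ``reproduces $\pi_{i+1}\circ f$''. This cannot yield the exact equality $g_{|A}=f_{|A}$: controlling the single coordinate $\pi_{i+1}\circ g$ of the thread says nothing about $\pi_k\circ g$ for $k>i+1$, and even at level $i+1$ a nerve map followed by a simplicial realization only lands in the correct simplex; it does not recover $\pi_{i+1}\circ f$ pointwise. The paper sidesteps this entirely: the map $h$ produced by Lemma~\ref{lem:strong carrier} is defined only on $Z=Y\setminus\bigcup\mathcal{R}$, where $\mathcal{R}$ is the set of elements of $\W{}$ meeting $A$ (so $Z$ misses a neighborhood of $A$ contained in $U$, which is all that is needed for the mesh bound on $Y\setminus U$), and the final $g$ is obtained by applying the Carrier Theorem to extend the partial map $f_{|A}\cup h$ over all of $Y$, carried by the level-$i$ carrier $C(f^{-1}(O))=O$. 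Exact agreement on $A$ holds because $g$ is literally an extension of $f_{|A}$, and $\O{i}$-closeness holds because $f$, $h$ and $g$ are all carried by $C$. You gesture at this with ``alternatively invoke the Carrier Theorem'', but without the decomposition of $Y$ into $\bigcup\mathcal{R}$ and $Z$ and the gluing of $f_{|A}$ with $h$ by a carrier at the coarser level $i$, condition (1) is not established; the role of the $B\subset A$ clause of \qts\ is only to make the lifted nerve map compatible with $\tilde D\circ K$ over $\mathcal{R}$ so that this gluing is carried by a common carrier, not to force $g=f$ on $A$.
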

\begin{proof}
  Let $\W{}$ be a cover of $Y$ that satisfies the following conditions:
  \begin{enumerate}
  \item $\W{} \prec f^{-1}(\O{i+1})$;
  \item $\W{}$ is open, has multiplicity at most $n+1$,
    is locally finite and has cardinality at most $\kappa$;
  \item $\mesh \st \W{} \leq \varepsilon$; and
  \item $\st_\W{} A \subset U$.
\end{enumerate}


  Let $C \colon f^{-1}(\O{i}) \to \O{i}$ be defined by the formula $C(f^{-1}(O)) = O$ for each $O\in \O{i}$.
  It is a carrier and $f$ is carried by $C$.
  Let $\tilde C \colon N(f^{-1}(\O{i})) \to N(\O{i}) = K_i$ be the map between nerves that is induced by $C$.

  Let $\mathcal{R} = \{ F \in \W{} \colon F \cap A \neq \emptyset \}$. Let $J \colon N(\mathcal{R}) \to N(\W{})$
  be the identity map ($N(\mathcal{R})$ is a subcomplex of $N(\W{})$).

  Let $D \colon f^{-1}(\O{i+1}) \to \O{i+1}$ be defined by the formula $D(f^{-1}(O)) = O$ for each $O\in \O{i+1}$.

  Let $K \colon N(\W{}) \to N(f^{-1}(\O{i+1}))$ be a map such that if $K(v(F)) = v(U)$, then $F \subset U$. Such a map exists because $\W{}$ refines $f^{-1}(\O{i+1})$; moreover, it is a simplicial map.

  Let $L \colon N(f^{-1}(\O{i+1})) \to N(f^{-1}(\O{i}))$ be a map induced by inclusions. The following diagram is commutative.

\begin{center}
\begin{tikzpicture}
  \matrix (m) [matrix of math nodes,row sep=3em,column sep=2.5em,minimum width=2em]
  {
     & & N(f^{-1}(\O{i})) & N(\O{i}) = K_i \\
     N(\mathcal{R}) & N(\W{}) & N(f^{-1}(\O{i+1})) & N(\O{i+1}) = K_{i+1} \\};
  \path[-stealth]
    (m-2-1) edge node [above] {$J$} (m-2-2)
    (m-2-2) edge node [above] {$K$} (m-2-3)
    (m-2-3) edge node [left] {$L$} (m-1-3)
    (m-1-3) edge node [above] {$\tilde C$} (m-1-4)
    (m-2-3) edge node [above] {$\tilde D$} (m-2-4)
    (m-2-4) edge node [left] {$p_i$} (m-1-4)
;
\end{tikzpicture}  
\end{center}

It follows that the following diagram is commutative.
\begin{center}
\begin{tikzpicture}
  \matrix (m) [matrix of math nodes,row sep=3em,column sep=7em,minimum width=2em]
  {
     N(\W{}) & N(\O{i}) = K_i \\
     N(\mathcal{R}) & N(\O{i+1}) = K_{i+1} \\};
  \path[-stealth]
  (m-2-1) edge node [above] {$\tilde D \circ K \circ J$} (m-2-2)
  (m-2-1) edge node [left] {$J$} (m-1-1)
  (m-2-2) edge node [left] {$p_i$} (m-1-2)
  (m-1-1) edge node [above] {$\tilde C \circ L \circ K$} (m-1-2)
  (m-1-1) edge[dotted] node [above] {$\tilde E$} (m-2-2)  
;
\end{tikzpicture}  
\end{center}

Since $p_i$ satisfies \qts{} we can lift $\tilde C \circ L \circ K$ to a simplicial map $\tilde E \colon N(\W{}) \to K_{i+1}$
such that 
\begin{enumerate}
\item $p_i \circ \tilde E = \tilde C \circ L \circ K$.
\item $\tilde E \circ J = \tilde D \circ K \circ J$.
\item $\tilde E$ is an embedding on $N(\W{}) \setminus N(\mathcal{R})$.
\end{enumerate}
Let $E \colon \W{} \to \O{i+1}$ be a map such that $\tilde E(v(F)) = v(E(F))$.
It is a carrier and $f_{|A}$ is carried by $E$.

Let $Z = Y \setminus \bigcup \mathcal{R}$.
By Lemma~\ref{lem:strong carrier} there exists a map $h \colon Z \to X$
  that is carried by $E_{|\W{} \setminus \mathcal{R}}$ and such that $\mesh h^{-1}(\O{i+1}) \leq \mesh \st \W{} \leq \varepsilon$.
  It is $\O{i}$-close to $f$ as both maps are carried by $C$.
  By Carrier Theorem, there exists a map $g \colon Y \to X$ that extends $f_{|A} \cup h$ and is carried by $C$.
  It satisfies conditions that we were looking for.
\end{proof}

\begin{lemma}\label{lem:closed embedding}
  Let $X$ be a Polish space. Let $f \colon X \to Y$ be a continuous map. 
  If for each $y \in Y$\[ \lim_{n \to \infty} \diam f^{-1}(B(y, \frac 1n)) = 0, \] then $f$ is a closed embedding of $X$ into $Y$.  
\end{lemma}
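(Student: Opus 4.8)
The plan is to fix, once and for all, a complete metric $d$ on $X$ inducing its topology — such a metric exists because $X$ is Polish, and the diameters appearing in the hypothesis are understood with respect to it — and then to verify directly the three properties that together say that $f$ is a closed embedding: that $f$ is injective, that $f^{-1}$ is continuous on $f(X)$, and that $f(X)$ is closed in $Y$.

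Injectivity is immediate: if $f(x_1) = f(x_2) = y$, then $x_1, x_2 \in f^{-1}(B(y, \tfrac{1}{n}))$ for every $n$, so $d(x_1, x_2) \le \diam f^{-1}(B(y, \tfrac{1}{n})) \to 0$ and hence $x_1 = x_2$. For continuity of the inverse it suffices (both spaces being metric) to argue sequentially: if $f(x_k) \to y = f(x)$, fix $\varepsilon > 0$, choose $n$ with $\diam f^{-1}(B(y, \tfrac{1}{n})) < \varepsilon$, and observe that for all large $k$ both $x_k$ and $x$ lie in $f^{-1}(B(y, \tfrac{1}{n}))$, so $d(x_k, x) < \varepsilon$; hence $x_k \to x$, which is what we need.

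The one step that genuinely uses completeness is closedness of the image. Given $y \in \overline{f(X)}$, pick $x_k \in X$ with $f(x_k) \to y$. The sequence $(x_k)$ is $d$-Cauchy: for $\varepsilon > 0$ choose $n$ with $\diam f^{-1}(B(y, \tfrac{1}{n})) < \varepsilon$; then for all sufficiently large $k, l$ we have $f(x_k), f(x_l) \in B(y, \tfrac{1}{n})$, so $x_k, x_l \in f^{-1}(B(y, \tfrac{1}{n}))$ and therefore $d(x_k, x_l) < \varepsilon$. By completeness of $d$, $x_k \to x$ for some $x \in X$, and continuity of $f$ forces $f(x) = y$, so $y \in f(X)$.

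There is essentially no hard part here: the argument is entirely soft, and the only thing to be careful about is that the ambient metric on $X$ with respect to which $\diam$ is taken is compatible with completeness — separability of $X$ enters only insofar as it guarantees that a complete compatible metric exists. I would close by noting that injectivity of $f$, together with continuity of $f^{-1}$ on $f(X)$ and closedness of $f(X)$ in $Y$, is precisely the assertion that $f$ is a closed embedding of $X$ into $Y$.
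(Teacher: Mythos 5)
Your proof is correct and follows essentially the same route as the paper's: injectivity falls out of the diameter hypothesis exactly as in the paper, and completeness is used in the same place to pull a limit point of the image back into $X$ (your Cauchy-sequence argument is just the sequential form of the paper's appeal to Cantor's Intersection Theorem applied to the sets $f^{-1}(B(y_0,\tfrac1n)\cap f(A))$). The only cosmetic difference is the decomposition: you verify ``injective $+$ inverse continuous on $f(X)$ $+$ $f(X)$ closed,'' whereas the paper shows directly that $f(A)$ is closed in $Y$ for every closed $A\subset X$, which subsumes your inverse-continuity step.
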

\begin{proof}
  If there are $x_0\neq x_1$ with $f(x_0) = f(x_1) = y$, then $\diam f^{-1}(B(y, \delta))>0$ contradicting the assumption. Hence $f$ is one-to-one. 
  We'll prove that $f$ is a closed map.
  Let $A$ be a closed subset of $X$. 
  Let $y_0  \in \Cl f(A)$.
  Let \[ A_n = f^{-1}\left(B\left(y_0, \frac 1n\right) \cap f\left(A\right)\right). \]
  By the assumption $\lim_{n \to \infty} \diam A_n = 0$. 
  Each $A_n$ is non-empty and closed and $X$ is complete.
  By Cantor's Intersection Theorem~\cite{engelking1989} there exists $x_0 \in \bigcap_{n \in \mathbb{N}} A_n$.
  We have $x_0 \in A$ and $f(x_0) = y_0$ by the definition of $A_n$.
  Therefore $y_0 \in f(A)$ and $f(A)$ is closed.
\end{proof}


\begin{theorem}\label{thm:awn universal}
  Let
  \[
  X = \lim_{\longleftarrow} \left(   
  K_1\xleftarrow{p_1}K_2\xleftarrow{p_2}\cdots \right).
  \]
  If each bonding map $p_i$ satisfies \qts, then $X$ is strongly $\Akn$-universal.
\end{theorem}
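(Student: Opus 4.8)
By definition, $X$ is strongly $\Akn$-universal if, given $Y\in\Akn$ (a complete metric space of dimension $\le n$ and weight $\le\kappa$), a closed $A\subseteq Y$, a map $f\colon Y\to X$ whose restriction to $A$ is a closed embedding, and an open cover of $X$, one can find a closed embedding $g\colon Y\to X$ with $g|_A=f|_A$ that is close to $f$ with respect to the cover. By Lemma~\ref{lem:oi} the covers $\O{i}$ form a system with $\O{i+1}<\O{i}$, with $\O{i+1}$ refining $\O{i}$, and with $\sum_i\mesh\O{i}<\infty$ (so $\mesh\O{i}\to0$); using these natural covers, it is enough to construct, for each $f$ and each $i_0$, a closed embedding $g$ with $g|_A=f|_A$ that is $\O{i_0}$-close to $f$. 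The proof of Lemma~\ref{lem:closed embedding} uses only completeness of the domain together with Cantor's Intersection Theorem, so it applies to any complete metric domain, not just Polish ones; hence it suffices to arrange moreover that $\mesh g^{-1}(\O{j})\to0$ as $j\to\infty$. This gives the hypothesis of Lemma~\ref{lem:closed embedding}: for $y\in X$ the set $\pi_j^{-1}(\ost_{K_j}\sigma)$, with $\sigma$ the carrier simplex of $\pi_j(y)$, is an open neighbourhood of $y$ lying in a single element of $\O{j}$, so a small metric ball about $y$ lies in an element of $\O{j}$ once $j$ is large, and $\mesh g^{-1}(\O{j})$ is non-increasing in $j$ because $\O{j+1}$ refines $\O{j}$.

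The plan is to produce $g$ as a uniform limit of maps $f_0=f,f_1,f_2,\dots\colon Y\to X$, each obtained from its predecessor by one application of Lemma~\ref{lem:lift}. Fix a rapidly increasing sequence of levels $i_0<i_1<\cdots$ and tolerances $\varepsilon_k\downarrow0$. At stage $k\ge1$, apply Lemma~\ref{lem:lift} to $f_{k-1}$ with the level $i_{k-1}$, the tolerance $\varepsilon_k$, the closed set $A$, and an open neighbourhood $U_k\supseteq A$ (with $A=U_k=\emptyset$ in the non-relative case). This yields $f_k$ with $f_k|_A=f_{k-1}|_A$, with $f_k$ $\O{i_{k-1}}$-close to $f_{k-1}$, and with $\diam f_k^{-1}(O)\le\varepsilon_k$ for every $O\in\O{i_{k-1}+1}$ met outside $U_k$. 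Since the levels $i_{k-1}$ are distinct, $\sum_k\mesh\O{i_{k-1}}\le\sum_i\mesh\O{i}<\infty$, so $(f_k)$ is uniformly Cauchy and converges uniformly to a continuous $g\colon Y\to X$ with $g|_A=f|_A$; taking $i_0$ large makes the total displacement so small that $g$ is $\O{i_0}$-close to $f$. (For the relative version one additionally shrinks the $U_k$ towards $A$ and combines the mesh control off $A$ with the embedding hypothesis on $A$ in the usual way.)

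It remains---and this is the core of the argument---to verify $\mesh g^{-1}(\O{j})\to0$. For $j=i_{s-1}+1$ one already has $\diam f_s^{-1}(O)\le\varepsilon_s$ for $O\in\O{j}$ (off $U_s$); the danger is that the infinitely many later corrections $f_s\to f_{s+1}\to\cdots$, each $\O{i_m}$-close to its predecessor at the strictly finer level $i_m>j$, enlarge these preimages. Here one exploits that the map delivered by Lemma~\ref{lem:lift} is carried by the same (level-$i_{k-1}$) carrier as its input, so a correction at a level $i\ge j$ can move $\pi_j$-images only within the projection to $K_j$ of a single $\O{i}$-star, a displacement that shrinks with $i$; summing these displacements over all later stages and using the absorption $\O{i+1}<\O{i}$ together with Lemma~\ref{lem:small steps} to replace the resulting chain of star-enlargements by one bounded star, one obtains $\mesh g^{-1}(\O{j})\le c_n\,\varepsilon_s$ for a constant $c_n$ depending only on $n$. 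Hence $\mesh g^{-1}(\O{j})\to0$ along $j=i_{s-1}+1$, and by monotonicity for every $j$, so Lemma~\ref{lem:closed embedding} shows $g$ is a closed embedding, completing the proof.

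The main obstacle is precisely this last estimate: guaranteeing that the diameter bound on a family of preimages, once achieved at some stage, is not destroyed by the infinitely many subsequent finer corrections. Making that bound persistent is exactly what the geometry of the covers $\O{i}$ from Lemma~\ref{lem:oi} and the small-steps estimate of Lemma~\ref{lem:small steps} are for. The step from weight $\omega$ to arbitrary $\kappa$ costs nothing here, since Lemma~\ref{lem:lift} already carries the weight bound (through the cardinality of the auxiliary cover of $Y$) and Lemma~\ref{lem:closed embedding} needs only completeness, not separability, of $Y$.
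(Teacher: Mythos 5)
Your overall strategy coincides with the paper's: iterate Lemma~\ref{lem:lift} to produce maps $f_k$, use $\sum_i \mesh \O{i} < \infty$ for uniform convergence of the sequence, use Lemma~\ref{lem:small steps} to show that the mesh control achieved at stage $s$ survives all later corrections (the paper's version of your $c_n\,\varepsilon_s$ bound is $\diam g_i^{-1}(\st^3_{\O{i}} x) \leq 5\cdot\frac1i$), and conclude with Lemma~\ref{lem:closed embedding}, whose proof, as you correctly observe, needs only completeness of the domain, not separability. So the skeleton and the ``core estimate'' are right.

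The genuine gap is your opening reduction: ``it is enough to construct, for each $i_0$, a closed embedding $g$ that is $\O{i_0}$-close to $f$.'' Strong universality, as the paper uses it, requires a closed embedding that is $\U{}$-close to $f$ for an \emph{arbitrary} open cover $\U{}$ of $X$. Since $X$ is not compact, $\U{}$ need not have a positive Lebesgue number, so there is in general no $i_0$ for which $\O{i_0}$-closeness (or smallness of the total sup-metric displacement) implies $\U{}$-closeness: the elements of $\U{}$ may shrink locally faster than any fixed $\mesh\O{i_0}$. The paper resolves this with the sets
\[
 R_i = \Cl\nolimits_X\{\, y \in Y \colon \forall_{U \in \U{}}\ \st^2_{\O{i}} f(y) \not\subset U \,\},
\]
which it feeds into the closed-set slot of Lemma~\ref{lem:lift} at stage $i$: the point $y$ is frozen at $f(y)$ until the first stage $i$ with $y\notin R_i$, i.e.\ until $\st^2_{\O{i}} f(y)$ fits inside some $U\in\U{}$, and Lemma~\ref{lem:small steps} then confines all subsequent motion of the images to $\Cl\st^2_{\O{i}} f(y)\subset U$. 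You use the pair $(A,U_k)$ of Lemma~\ref{lem:lift} only for the relative version, so this device is missing from your argument; without it you prove only sup-metric approximation. Two smaller points: the neighbourhood of $y$ lying in a single element of $\O{j}$ should be $\pi_j^{-1}(\ost_{K_j} v)$ for $v$ a vertex of the carrier $\sigma$ of $\pi_j(y)$ (the set $\ost_{K_j}\sigma$ \emph{contains} the relevant vertex stars rather than being contained in one); and in your relative version, controlling $\mesh g^{-1}(\O{j})$ off $A$ does not by itself exclude $g(y)=g(a)$ for $y\notin A$, $a\in A$, which ``in the usual way'' leaves unaddressed --- though the theorem as stated is non-relative, so this is peripheral.
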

\begin{proof}
  Let $Y$ be a complete, at most $n$-dimensional metric space.
  Let $f \colon Y \to X$.
  Let $\U{}$ be an open cover of $X$.
  Let $\O{i}$ be a cover of $X$ as in Definition~\ref{def:oi}.
  Let
  \[
    R_i = \Cl_X \{ y \in Y \colon \forall_{U \in \U{}}  \st^2_\O{i} f(y) \not\subset U \}
  \]
  for $i > 1$ and let $R_1 = Y$.

  Let $g_1 = f$. By recursive application of Lemma~\ref{lem:lift} we construct
    a sequence of maps $g_i \colon X \to Y$ satisfying the following conditions.
  \begin{enumerate}
  \item $g_{i|R_i} = f_{|R_i}$
  \item $g_i$ is $\O{i-1}$-close to $g_{i-1}$
  \item $\mesh g_{i|Y\setminus R_i}^{-1}(\O{i-1}) \leq \frac 1i$.
  \end{enumerate}
  
  By (2) and by Lemma~\ref{lem:oi}, the sequence $g_i$ is uniformly convergent.
  As inverse limit of complete spaces, $X$ is complete.
  Therefore the limit
  \[
    g = \lim_{i \to \infty} g_i.
  \]
  exists and is continuous.
  Let $y \in Y$ and let $i$ such that $y \in Y \setminus R_i$ and $y \in R_{i-1}$.
  We have $g_{i-1}(y) = f(y)$.
  By (2) and by Lemma~\ref{lem:small steps}, $g(y) \in \Cl_X \st^2_\O{i} f(y)$.
  By the definition of $R_i$, there exists $U \in \U{}$ such that $f(y), g(y) \in U$. 
  Therefore $g$ is $\U{}$-close to $f$.
  
  Fix $y \in Y$ and $i \geq 1$. By Lemma~\ref{lem:small steps}, $f$ is $\st^2 \O{i}$-close to $g_i$.
  Therefore, $f^{-1}(\st_\O{i} y) \subset g_i^{-1}(\st_\O{i}^3 y)$ and $\diam g_i^{-1}(\st_\O{i}^3 y) \leq 5 \frac 1i$.

Therefore, $\lim_{n \to \infty} \diam f^{-1}(B(y, \frac 1n)) = 0$ and by Lemma~\ref{lem:closed embedding}, $f$ is a closed embedding.
\end{proof}

\section{Construction of N\"obeling manifolds of weight $\kappa$}

\begin{construction}
  Let $K$ be a simplicial complex.
  Let $\kappa$ be a cardinal number.
  Let $n$ be a natural number.
  We construct a simplicial complex $\N{}^n_\kappa(K)$ and a simplicial map $\pi \colon \N{}^n_\kappa(K) \to \beta K$.
  The complex $\N{}^n_\kappa(K)$ has $\kappa$ vertices corresponding to each vertex of $\beta K$.
  The map $\pi$ maps each vertex of $\N{}^n_\kappa(K)$ to a corresponding vertex of $\beta K$.
  The simplicial structure on $\N{}^n_\kappa(K)$ is the maximal $n$-dimensional simplicial structure such that $\pi$ is a simplicial map into $\beta K$. 
\end{construction}

\newcommand{\w}{22mm}
\begin{center}
\begin{longtable}{m{6mm}m{2mm}m{22mm}m{2mm}m{32mm}m{2mm}m{10mm}}
	\raisebox{4mm}{\includegraphics[height=4cm]{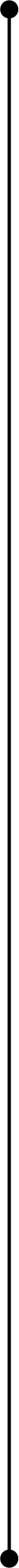}} &
	$\leftarrow$ &
	\raisebox{2mm}{\includegraphics[height=4.4cm]{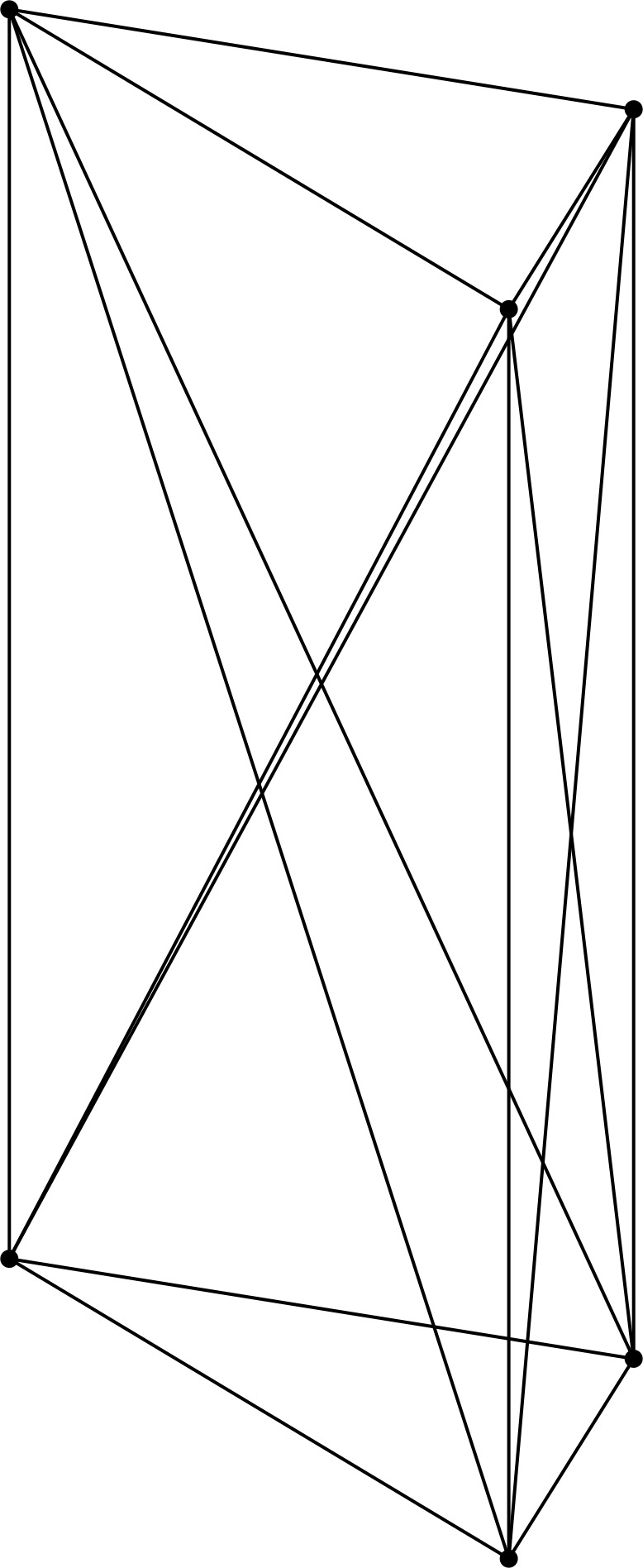}} &
	$\leftarrow$ &
	\includegraphics[height=4.8cm]{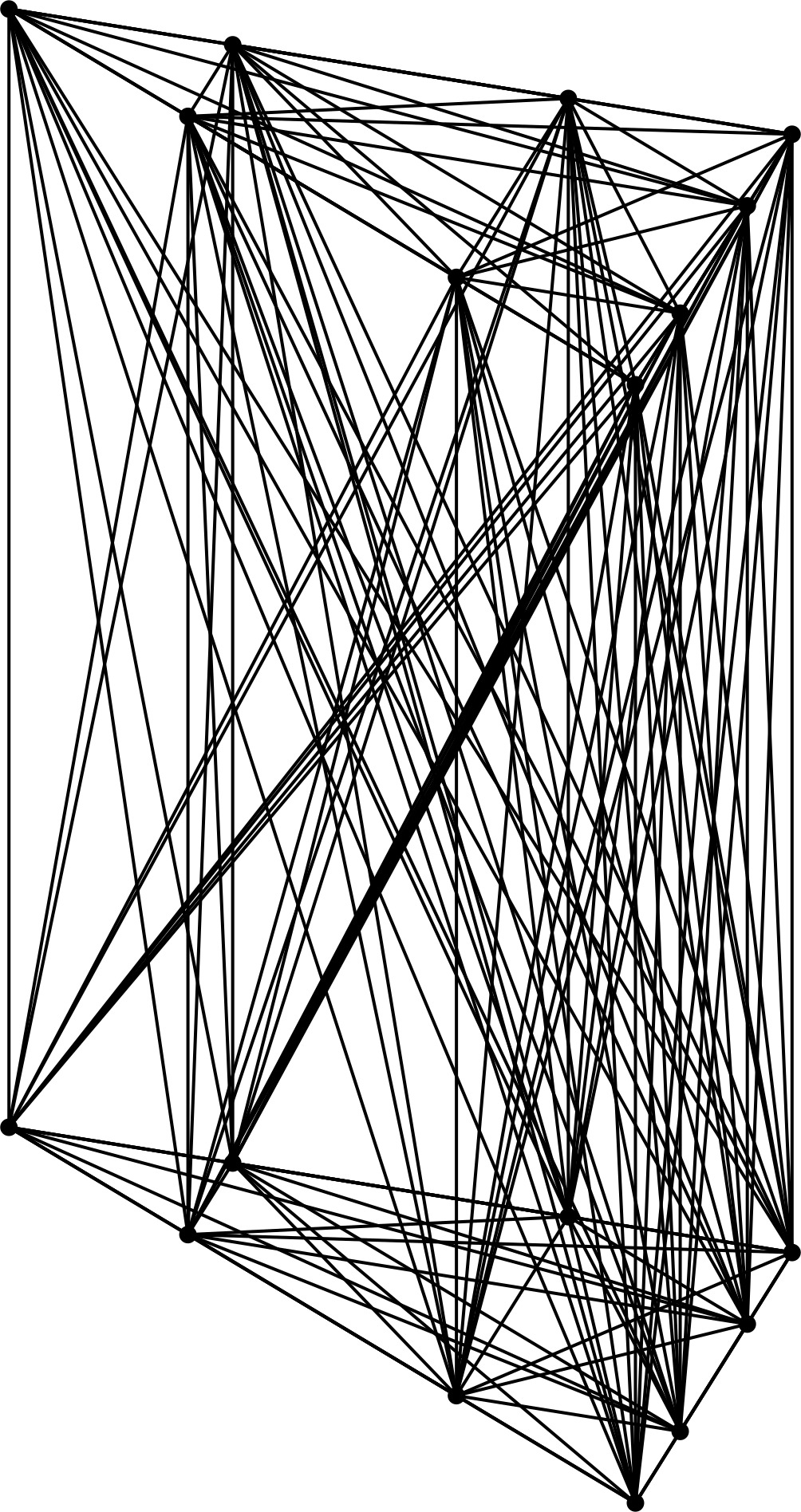} &
	$\leftarrow$ &
	$\cdots$
\end{longtable}

{\footnotesize {\bf Figure.} The sequence $K \leftarrow \N{}^1_3(K) \leftarrow \N{}^1_3(\N{}^1_3(K)) \leftarrow \cdots$ for a single-edge starting graph $K$. Here $\kappa = 3$; to get a N\"obeling manifold take infinite $\kappa$.}
\end{center}

\begin{lemma}
  The map $\pi \colon \nuk \to K$ is a quasi-simplicial map satisfying \qts.
\end{lemma}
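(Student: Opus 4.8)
The plan is to read off both assertions from the Construction. That $\pi$ is quasi-simplicial requires nothing beyond unwinding definitions: the Construction delivers $\pi$ as a \emph{simplicial} map from $\N{}^n_\kappa(K)$ onto $\beta K$, so, identifying $\beta K$ with $K$ as spaces, $\pi\colon \N{}^n_\kappa(K)\to K$ is simplicial into the first barycentric subdivision of $K$, which is exactly what it means to be quasi-simplicial. The real content is the lifting property \qts.

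To prove \qts\ I would fix a pair $A\supset B$ of at most $n$-dimensional simplicial complexes of weight at most $\kappa$, a simplicial map $g\colon B\to \N{}^n_\kappa(K)$, and a quasi-simplicial map $G\colon A\to K$ (equivalently, a simplicial map $A\to\beta K$) with $\pi\circ g = G|_B$, and then build the lift $\tilde g$ by prescribing its values on the vertices of $A$. On the vertices of $B$ put $\tilde g = g$. For a vertex $a$ of $A$ not in $B$, the point $G(a)$ is a vertex of $\beta K$, and by the Construction the fibre of $\pi$ over $G(a)$ contains exactly $\kappa$ vertices of $\N{}^n_\kappa(K)$; choose $\tilde g(a)$ among them, doing so injectively over all such $a$ and avoiding the vertices already used by $g$. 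This is possible because for each vertex $u$ of $\beta K$ the number of vertices of $A$ lying over $u$ is at most the weight of $A$, hence at most $\kappa$, while the $\pi$-fibre over $u$ has $\kappa$ vertices (immediate for infinite $\kappa$, a routine count otherwise).

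Next I would verify the three requirements. Commutativity $\pi\circ\tilde g = G$ holds vertexwise — on $v(B)$ because $\pi(g(b)) = G(b)$ by hypothesis, and on $v(A)\setminus v(B)$ because $\tilde g(a)$ was chosen in $\pi^{-1}(G(a))$ — and this suffices, as simplicial maps are determined by their vertex maps. That $\tilde g$ is simplicial is precisely the point the Construction makes trivial: if $\sigma$ is a simplex of $A$ with vertex set $S$, then $|S|\leq n+1$ and $\pi(\tilde g(S)) = G(S)$ is a face of $\beta K$ since $G$ is simplicial, so $\tilde g(S)$ is a set of at most $n+1$ vertices whose $\pi$-image spans a simplex of $\beta K$; by the maximality of the $n$-dimensional simplicial structure on $\N{}^n_\kappa(K)$ such a set spans a simplex. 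Finally $\tilde g|_B = g$ by construction, and the injective, fresh-vertex choice on $v(A)\setminus v(B)$ makes $\tilde g$ an embedding on $A\setminus B$ in the sense of \qts\ (a simplicial map injective on a set of vertices restricts to an isomorphism onto the subcomplex they support).

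I do not expect a genuine obstacle here: the maximality clause in the Construction forces every $\pi$-compatible assignment of vertices to be automatically simplicial, so the only thing needing care is the cardinality bookkeeping that supplies enough unused vertices in each fibre of $\pi$ to make the extension injective off $B$ — this is the single place where the hypotheses "weight of $A$ at most $\kappa$" and "$\kappa$ vertices in each fibre of $\pi$" are actually used.
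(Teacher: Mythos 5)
Your proposal is correct in outline but takes a genuinely different (and more substantive) route than the paper. The paper's own proof is two sentences: quasi-simpliciality is immediate from the Construction, and \qts\ is disposed of by citing Lemma~\ref{lem:qts} --- which is in fact the \emph{uniqueness} lemma and assumes \qts\ rather than establishing it, so the direct verification is left implicit. You supply that verification, and your key observation is exactly the right one: the maximality of the $n$-dimensional simplicial structure on $\N{}^n_\kappa(K)$ means that \emph{any} vertex assignment compatible with $\pi$ and $G$ is automatically simplicial, so the whole of \qts\ reduces to choosing vertices in fibers. This is the argument the construction is designed to make work, and it is worth having on record.

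The one step you state too glibly is the cardinality bookkeeping, which you yourself identify as the only place the hypotheses are used. You choose $\tilde g(a)$ for $a \in v(A)\setminus v(B)$ injectively \emph{and avoiding the vertices already used by $g$}, and call this ``immediate for infinite $\kappa$.'' It is not: $B$ may have $\kappa$ vertices over a vertex $u$ of $\beta K$ and $g$ may map them \emph{onto} the entire fiber $\pi^{-1}(u)$ (take $B$ a discrete set of $\kappa$ vertices and $g$ a bijection onto the fiber), leaving no fresh vertices at all; removing a $\kappa$-sized set from a $\kappa$-sized set can leave the empty set. The repair is to weaken what you demand: the condition in \qts\ (correcting the paper's typo ``$g$ is an embedding on $B\setminus A$'') only asks that $\tilde g$ be an embedding on $A \setminus B$, i.e.\ injective on the vertices outside $B$, not that its image there be disjoint from $g(v(B))$. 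With that reading you only need an injection of the at most $\kappa$ vertices of $v(A)\setminus v(B)$ lying over $u$ into the $\kappa$-element fiber, which always exists; maximality of the simplicial structure then guarantees simpliciality exactly as you argue. You should either adopt this weaker requirement explicitly or justify why enough unused vertices remain in your intended applications --- as written, the sentence claiming the avoidance is possible is false.
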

\begin{proof}
  By the definition, $\pi$ is constructed to be a simplicial map into $\beta K$, hence it is quasi-simplicial into $K$. It satisfies \qts\ by Lemma~\ref{lem:qts}.
\end{proof}

\begin{construction}
  Let $K$ be a simplicial complex.
  Let $\kappa$ be a cardinal number.
  Let $n$ be a natural number.
  Let $K_0 = K$.
  Let $K_{i+1} = \N{}^n_\kappa(K_i)$ and let $p_{i+1}$ be the canonical map from $\N{}^n_\kappa(K_i)$ to $\beta K_i$.
  Let
  \[
    \nuk = \lim_{\longleftarrow} \left(  K_0 \xleftarrow{p_1} K_1 \xleftarrow{p_2} K_2 \xleftarrow{p_3} \cdots \right)
  \]
  We let $\pi_K \colon \nuk \to K$ denote the long projection.
\end{construction}

\begin{lemma}\label{lem:nuk is polish}
  $\nuk$ is an $n$-dimensional metric space of weight $\kappa$.
\end{lemma}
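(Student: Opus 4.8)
The plan is to verify the three properties — $n$-dimensionality, metrizability, and weight $\kappa$ — essentially by unwinding the construction and using standard inverse limit facts. Since $\nuk = \lim_\leftarrow(K_0 \xleftarrow{p_1} K_1 \xleftarrow{p_2} \cdots)$ with each $K_i$ a simplicial complex carrying the metric topology and each $p_i$ surjective and $n$-regular (by the preceding lemma, each $p_i$ satisfies \qts, and Lemma~\ref{lem:qts fibers} gives $n$-regularity), the inverse limit is a closed subset of the product $\prod_i K_i$. Each $K_i$ is metrizable (metric topology on a simplicial complex), so the countable product is metrizable, and a closed subspace of a metrizable space is metrizable; hence $\nuk$ is metrizable.

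Next I would bound the weight. Each $K_i$ has vertex set of cardinality at most $\kappa$: this holds for $K_0 = K$ if we assume $K$ has weight at most $\kappa$ (and in general $\N{}^n_\kappa(K_i)$ has exactly $\kappa$ vertices over each vertex of $\beta K_i$, so as long as $\beta K_i$ has at most $\kappa$ vertices, so does $K_{i+1}$ — one checks $\kappa \cdot |v(K_i)| = \kappa$ whenever $\kappa$ is infinite and $|v(K_i)| \le \kappa$, giving the inductive step; the base case is handled by noting $K$ appears only as an index). A simplicial complex with the metric topology and at most $\kappa$ vertices has weight at most $\kappa$ when $\kappa$ is infinite (the open stars of vertices, together with a basis of each open simplex, which is finite-dimensional, yield a base of size $\kappa$). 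Then $\prod_i K_i$ has weight at most $\kappa \cdot \omega = \kappa$, and $\nuk$, as a subspace, has weight at most $\kappa$. For the lower bound, Lemma~\ref{lem:qts fibers} shows each fiber $p_i^{-1}(\delta)$ is a full $n$-simplex with at least $\kappa$ vertices, which forces $\nuk$ to contain at least $\kappa$ points that are pairwise far apart in a suitable sense, or more simply: $\pi_1^{-1}$ of a single vertex of $K$ already projects onto a complex with $\kappa$ vertices, and one extracts $\kappa$ threads at pairwise distance bounded below, so $\nuk$ has weight exactly $\kappa$. (If $\kappa$ is finite the statement should be read with the usual conventions; the interesting case is $\kappa$ infinite.)

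For the dimension, I would invoke the standard fact that the covering dimension of an inverse limit of compacta — or more generally, the dimension of $\lim_\leftarrow K_i$ — is bounded by $\liminf \dim K_i$ when the bonding maps are nice; here each $K_i$ is $n$-dimensional by construction (the simplicial structure on $\N{}^n_\kappa$ is the maximal $\emph{$n$-dimensional}$ one), so $\dim \nuk \le n$. Concretely, the covers $\O{i}$ of $\nuk$ by preimages of open stars have multiplicity at most $n+1$ (since $K_i$ is $n$-dimensional, an open star meeting $n+2$ others is impossible) and by Lemma~\ref{lem:oi} their meshes tend to $0$, so they witness $\dim \nuk \le n$. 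For the reverse inequality $\dim \nuk \ge n$, since $\nuk$ is an $ANE(n)$ (Theorem~\ref{thm:ane limit of polyhedra}) and each $p_i$ is $n$-regular, one shows that $\nuk$ maps onto an $n$-simplex in a way that cannot be factored through an $(n-1)$-dimensional space — or one uses that a full $n$-simplex embeds in each fiber, giving a topological $n$-cell inside $\nuk$, so $\dim \nuk \ge n$.

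The main obstacle will be the dimension lower bound $\dim \nuk \ge n$: producing an honest $n$-dimensional subset (e.g.\ an embedded $n$-cell) from the fiber structure requires checking that a simplex sitting in $p_i^{-1}(\delta)$ lifts coherently through all later stages to give a genuine subspace of the inverse limit homeomorphic to an $n$-simplex, which uses \qts\ to extend the embedding at each subsequent level. Everything else — metrizability, the weight computation, and $\dim \nuk \le n$ via the $\O{i}$ — is routine bookkeeping with the inverse limit and the cited lemmas.
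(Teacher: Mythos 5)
Your proposal is correct and follows essentially the same route as the paper; the paper's own proof is only three sentences, bounding the dimension above by viewing $\nuk$ as an inverse limit of at most $n$-dimensional complexes (after replacing $K$ by its $n$-skeleton) and below by exhibiting an $n$-simplex as a subspace, and noting completeness of the limit --- it does not address the weight at all, so your vertex-counting argument (including the observation that one must assume $K$ has weight at most $\kappa$, or read $\kappa$ as $\max(\kappa,w(K))$) supplies detail the paper omits. One concrete warning about the step you flag as the main obstacle: if you lift a single $n$-simplex $\delta_i$ of $\beta K_i$ to an isomorphic $n$-simplex $\Delta_{i+1} \subset p_{i+1}^{-1}(\delta_i)$ and iterate, then the image of $\Delta_{i+k}$ in $K_i$ is a simplex of the $k$-fold barycentric subdivision of $\Delta_i$, so its diameter tends to $0$ and the inverse limit of that tower degenerates to a point rather than an $n$-cell. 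The correct move is to lift the whole simplex as a section: choose one vertex of $K_{i+1}$ over each vertex of $\beta\Delta_i$; since the simplicial structure on $K_{i+1}$ is the maximal $n$-dimensional one over $\beta K_i$ and $\dim\beta\Delta_i\le n$, these chosen vertices span a subcomplex isomorphic to $\beta\Delta_i$ that $p_{i+1}$ maps homeomorphically (not simplicially isomorphically) onto $\Delta_i$, and the inverse limit of these non-shrinking copies is an honest $n$-simplex inside $\nuk$.
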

\begin{proof}
  We have $\nuk = \nu^n_\kappa(K^{(n)})$ (where $K^{(n)}$ denotes the $n$-dimensional skeleton of $K$) so without a loss of generality we may assume that $\dim K \leq n$.
  Then $\nuk$ is an inverse limit of a sequence of $n$-dimensional spaces, hence it is at most $n$-dimensional. It contains an $n$-dimensional simplex a subspace, hence it is $n$-dimensional.
  
  Since each $K_i$ is finite dimensional, it is complete. An inverse limit of complete spaces is complete.
\end{proof}

\begin{lemma}\label{lem:nuk is ane}
  $\nuk$ is an absolute neighborhood extensor in dimension $n$.
\end{lemma}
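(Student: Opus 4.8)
The plan is to realize $\nuk$ as an inverse limit of simplicial complexes with the metric topology whose bonding maps are surjective and $n$-regular, and then invoke Theorem~\ref{thm:ane limit of polyhedra}(1). The ingredients are already in place. First I would observe that each $K_i$ in the defining sequence is a simplicial complex, and by our standing convention it carries the metric topology, so hypothesis (I) of Theorem~\ref{thm:ane limit of polyhedra} holds. Second, each bonding map $p_{i+1} \colon K_{i+1} = \N{}^n_\kappa(K_i) \to \beta K_i = K_i$ is, by the Lemma immediately preceding this one, a quasi-simplicial map satisfying \qts{}, hence by Lemma~\ref{lem:qts fibers} it is $n$-regular; surjectivity is clear from the Construction, since $\pi$ sends the $\kappa$ vertices over a vertex of $\beta K_i$ onto that vertex and the simplicial structure on $\N{}^n_\kappa(K_i)$ is the maximal $n$-dimensional one making $\pi$ simplicial, so every simplex of $\beta K_i$ (being at most $n$-dimensional after passing to skeleta, or in general on its $n$-skeleton) is hit. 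Thus hypothesis (II) holds as well.

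With both hypotheses verified, Theorem~\ref{thm:ane limit of polyhedra}(1) yields directly that $\nuk$ is an $ANE(n)$, which is exactly the assertion. One minor point to address: Theorem~\ref{thm:ane limit of polyhedra} is stated for a sequence indexed from $K_1$, whereas our Construction indexes from $K_0$; this is a purely cosmetic reindexing. A second point, as in the proof of Lemma~\ref{lem:nuk is polish}, is that if $\dim K > n$ then the relevant statement is about $\nu^n_\kappa(K^{(n)})$, since $\nuk = \nu^n_\kappa(K^{(n)})$; after replacing $K$ by its $n$-skeleton we have $\dim K_i \le n$ for all $i$, and surjectivity of each $p_i$ onto all of $K_i$ is then unproblematic.

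I do not expect a serious obstacle here — the lemma is essentially a bookkeeping corollary of Theorem~\ref{thm:ane limit of polyhedra} once one checks that the bonding maps of the Construction meet its hypotheses, and that checking was already done in the preceding lemmas (quasi-simpliciality and \qts{} give $n$-regularity via Lemma~\ref{lem:qts fibers}). If anything requires a sentence of care, it is the surjectivity of the $p_i$ and the reduction to $\dim K \le n$; both are handled exactly as in Lemma~\ref{lem:nuk is polish}. Alternatively, one could cite Lemma~\ref{lem:oi}(2), which already records that each $\O{i}$ is an open $AE(n)$-cover of $X = \nuk$ under precisely these hypotheses, and note that $X$ being covered by $ANE(n)$ open sets forces $X \in ANE(n)$ by Dugundji's theorem (Theorem~\ref{thm:dugundji}), since $ANE(n) = \mathcal{LC}^{n-1}$ is a local property; but the direct appeal to Theorem~\ref{thm:ane limit of polyhedra}(1) is cleaner.
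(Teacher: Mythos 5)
Your proposal is correct and follows exactly the paper's route: establish that each bonding map is $n$-regular via Lemma~\ref{lem:qts fibers} and then apply Theorem~\ref{thm:ane limit of polyhedra}(1). The extra remarks on surjectivity and reduction to the $n$-skeleton are sensible bookkeeping that the paper leaves implicit.
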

\begin{proof}
  Each $p_i$ is $n$-regular by Lemma~\ref{lem:qts fibers}. By Theorem~\ref{thm:ane limit of polyhedra}(1), $\nuk$ is $ANE(n)$.
\end{proof}

\begin{lemma}\label{lem:nuk is universal}
  $\nuk$ is strongly universal in a class of $n$-dimensional complete metric spaces of weight $\kappa$.
\end{lemma}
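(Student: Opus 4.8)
The plan is to recognize that this lemma is essentially a corollary of Theorem~\ref{thm:awn universal} applied to the specific inverse sequence defining $\nuk$. The first step is to check the single hypothesis of that theorem: that every bonding map of the sequence $K_0 \xleftarrow{p_1} K_1 \xleftarrow{p_2} \cdots$ satisfies \qts. Here $K_{i+1} = \N{}^n_\kappa(K_i)$ and $p_{i+1}$ is the canonical simplicial map onto $\beta K_i$, and this is exactly the content of the lemma immediately following the first Construction (``the map $\pi$ is quasi-simplicial satisfying \qts''), which in turn rests on Lemma~\ref{lem:qts}. For completeness one recalls the mechanism: by Lemma~\ref{lem:qts fibers} the point-inverse $p_{i+1}^{-1}(\delta)$ of each simplex $\delta$ of $\beta K_i$ is a full $n$-simplex on $\kappa$ vertices, so the maximality built into the definition of $\N{}^n_\kappa(K_i)$ forces $p_{i+1}$ to be, up to simplicial isomorphism, the universal such map, which is precisely what \qts\ demands.

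Granting that hypothesis, Theorem~\ref{thm:awn universal} yields directly that $\nuk$ is strongly $\Akn$-universal. The only remaining point is a bookkeeping identification of classes: $\Akn$ denotes the class of complete metric spaces of covering dimension at most $n$ and weight at most $\kappa$ (this is the class over which Lemma~\ref{lem:lift}, and hence the proof of Theorem~\ref{thm:awn universal}, operates, through the dimension bound used for the multiplicity-$(n{+}1)$ covers and the cardinality bound $\kappa$ on those covers). Strong $\Akn$-universality means that every map from a member of $\Akn$ into $\nuk$ can be approximated, arbitrarily closely with respect to any prescribed open cover of $\nuk$, by closed embeddings. Since spaces of weight exactly $\kappa$ form a subclass of $\Akn$, this is precisely strong universality in the class of $n$-dimensional complete metric spaces of weight $\kappa$; together with Lemma~\ref{lem:nuk is polish}, which places $\nuk$ itself in that class, nothing further is needed.

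I do not expect any real obstacle in this lemma: all of the work has already been absorbed into Theorem~\ref{thm:awn universal}, whose proof assembles Lemma~\ref{lem:lift} (the \qts-driven lifting of nerve maps), Lemma~\ref{lem:strong carrier} and the Carrier Theorem to produce approximating maps whose $\O{i}$-preimages shrink off a controlled closed set, and then invokes Lemma~\ref{lem:closed embedding} to upgrade that shrinking to a genuine closed embedding. For the present statement that entire apparatus is a black box, and the verification that each $p_i$ satisfies \qts\ has already been carried out; so the proof reduces to citing these two facts.
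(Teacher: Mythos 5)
Your proposal matches the paper's proof, which consists of the single line ``This follows from Theorem~\ref{thm:awn universal}''; you have simply made explicit the verification of the \qts{} hypothesis (via the lemma following the first Construction) and the identification of strong $\Akn$-universality with the stated notion, both of which the paper leaves implicit. No substantive difference in approach.
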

\begin{proof}
  This follows from Theorem~\ref{thm:awn universal}.
\end{proof}

\begin{lemma}\label{lem:nuk n-homotopy}
  The projection $\pi_K \colon \nuk \to K$ is an $n$-homotopy equivalence.
\end{lemma}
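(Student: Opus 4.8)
The plan is to show that $\pi_K$ is a weak $n$-homotopy equivalence and then invoke the theorem of \cite{nagorko2013} (cited in the Preliminaries) together with Lemma~\ref{lem:nuk is ane} and Lemma~\ref{lem:nuk is polish} to upgrade this to an $n$-homotopy equivalence. Since $\nuk$ is an $n$-dimensional $ANE(n)$-space by Lemmas~\ref{lem:nuk is polish} and~\ref{lem:nuk is ane}, and $K$ (equivalently $K^{(n)}$, on which the construction only depends) is an $ANE(\infty)$ by Lemma~\ref{lem:complex is ane}, both spaces lie in the class where weak $n$-homotopy equivalence and $n$-homotopy equivalence coincide. So the whole content is the weak $n$-homotopy statement.

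For the weak $n$-homotopy equivalence, I would apply Theorem~\ref{thm:ane limit of polyhedra}(2): since each $p_i = p_{i+1} \colon K_{i+1} = \N{}^n_\kappa(K_i) \to \beta K_i \simeq K_i$ is surjective and $n$-regular (the surjectivity is immediate from the construction, and $n$-regularity is Lemma~\ref{lem:qts fibers}, which applies because $\pi$ satisfies \qts), each long projection $\pi_i \colon \nuk \to K_i$ is a weak $n$-homotopy equivalence. In particular $\pi_0 = \pi_K \colon \nuk \to K_0 = K$ is a weak $n$-homotopy equivalence. One small bookkeeping point: the bonding map goes into $\beta K_i$ rather than $K_i$ itself, but $\beta K_i$ and $K_i$ are canonically homeomorphic as topological spaces (barycentric subdivision does not change the underlying space), so the inverse sequence in the Construction is of exactly the form required by Theorem~\ref{thm:ane limit of polyhedra}, and the long projection to $K_0$ is the map $\pi_K$.

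Then I would conclude: $\pi_K$ is a weak $n$-homotopy equivalence between two $n$-dimensional $ANE(n)$-spaces, hence by the cited theorem of \cite{nagorko2013} it is an $n$-homotopy equivalence. I do not anticipate a genuine obstacle here; the only thing to be careful about is the identification of the first term of the inverse sequence with $K$ and the passage through $\beta K_i$, and the fact that the construction $\nuk$ depends only on $K^{(n)}$, so one may as well assume $\dim K \leq n$ exactly as in the proof of Lemma~\ref{lem:nuk is polish}, which makes the hypotheses of Theorem~\ref{thm:ane limit of polyhedra} apply cleanly. The hard work has all been front-loaded into Lemma~\ref{lem:qts fibers} and Theorem~\ref{thm:ane limit of polyhedra}, so this lemma is essentially an assembly of already-established facts.
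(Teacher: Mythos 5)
Your proposal is correct and follows essentially the same route as the paper: $n$-regularity of the bonding maps via Lemma~\ref{lem:qts fibers}, then Theorem~\ref{thm:ane limit of polyhedra}(2) to get a weak $n$-homotopy equivalence, upgraded via the cited theorem of~\cite{nagorko2013}. In fact you spell out the final upgrading step that the paper's own two-line proof leaves implicit (its stated conclusion is an evident copy-paste slip from Lemma~\ref{lem:nuk is ane}), so your version is, if anything, the more complete one.
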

\begin{proof}
  Each $p_i$ is $n$-regular by Lemma~\ref{lem:qts fibers}. By Theorem~\ref{thm:ane limit of polyhedra}(2), $\nuk$ is $ANE(n)$.
\end{proof}

\begin{theorem}
  $\nuk$ is an $n$-dimensional abstract N\"obeling manifold of weight $\kappa$ and the projection $\pi \colon \nuk \to K$ is an $n$-homotopy equivalence.
\end{theorem}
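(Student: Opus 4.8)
The plan is to read the statement off the four lemmas that immediately precede it, since each clause of the Definition of an abstract $n$-dimensional N\"obeling manifold of weight $\kappa$ has already been isolated as a separate lemma about $\nuk$, and the remaining assertion about $\pi$ is a lemma as well.

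Concretely, $\nuk$ is by Construction the inverse limit of the sequence $K_0 \xleftarrow{p_1} K_1 \xleftarrow{p_2} \cdots$ with $K_0 = K$, $K_{i+1} = \N{}^n_\kappa(K_i)$, and $p_{i+1}$ the canonical map $\N{}^n_\kappa(K_i) \to \beta K_i$. The preceding Lemma shows that every such canonical map is quasi-simplicial and satisfies \qts, so by Lemma~\ref{lem:qts fibers} each $p_i$ is surjective and $n$-regular; thus the hypotheses of Theorem~\ref{thm:ane limit of polyhedra} and of Theorem~\ref{thm:awn universal} are met. I would then invoke, in order: Lemma~\ref{lem:nuk is polish} for clause (1), that $\nuk$ is an $n$-dimensional complete metric space of weight $\kappa$; Lemma~\ref{lem:nuk is ane} for clause (2), that $\nuk \in ANE(n)$; and Lemma~\ref{lem:nuk is universal} for clause (3), that $\nuk$ is strongly universal in the class $\Akn$ of $n$-dimensional complete metric spaces of weight $\kappa$ (the class named in the Definition). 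Together these three are exactly the definition of an abstract $n$-dimensional N\"obeling manifold of weight $\kappa$. Finally, the claim that the projection $\pi = \pi_K \colon \nuk \to K$ is an $n$-homotopy equivalence is precisely Lemma~\ref{lem:nuk n-homotopy}.

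There is essentially no obstacle: the mathematical content has been front-loaded into the supporting lemmas, and this theorem is the bookkeeping step recording that the Construction delivers everything promised in the abstract and the introduction. The only points deserving a line of care are matching the universality class $\Akn$ of Theorem~\ref{thm:awn universal} with the class appearing in clause (3) of the Definition, and remarking that the case $\kappa = \omega$ recovers the $n$-dimensional separable N\"obeling manifolds.
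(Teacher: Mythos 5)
Your proposal is correct and matches the paper exactly: the paper's proof is a one-line invocation of Lemmas~\ref{lem:nuk is polish}, \ref{lem:nuk is ane}, \ref{lem:nuk is universal}, and \ref{lem:nuk n-homotopy}, which is precisely your decomposition. The extra care you take in matching the universality class and verifying the hypotheses of the supporting theorems is reasonable but not part of the paper's own (very terse) argument.
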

\begin{proof}
  Apply Lemma~\ref{lem:nuk is polish}, \ref{lem:nuk is ane}, \ref{lem:nuk is universal}, \ref{lem:nuk n-homotopy}.
\end{proof}

\bibliographystyle{abbrv}
\bibliography{references2}

\end{document}